\documentclass[reqno, 11pt]{amsart}
\usepackage{fullpage,amssymb,stmaryrd}
\usepackage{lmodern}
\usepackage[T2A,T1]{fontenc}
\usepackage[utf8]{inputenc}
\usepackage[russian,english]{babel}
\usepackage{hyperref}
\usepackage[hyphenbreaks]{xurl}
\usepackage{amsmath}
\usepackage{xcolor}
\usepackage{xparse}
\usepackage{amsthm}
\usepackage{pgfplots}
\pgfplotsset{compat=1.18}
\usepackage{mathrsfs}
\usepackage[autostyle, english = american]{csquotes}
\usepackage[backend=biber,style=alphabetic,giveninits=true,maxnames=50,autolang=other]{biblatex}
\usepackage{tikz-cd}
\usepackage[shortlabels]{enumitem}
\hypersetup{
pdftitle={The Tate conjecture for abelian fourfolds over finite fields},
pdfsubject={Mathematics, Algebraic Geometry, Arithmetic Geometry},
pdfauthor={Matt Broe},
pdfkeywords={},
breaklinks=true,
hidelinks
}
\newtheoremstyle{style}
{} % Space above
{} % Space below
{\itshape} % Body font
{} % Indent amount
{\bfseries} % Theorem head font
{.} % Punctuation after theorem head
{.5em} % Space after theorem head
{} % Theorem head spec (can be left empty, meaning `normal')

\newtheoremstyle{theoremnum}
{} % Space above
{} % Space below
{\itshape} % Body font
{} % Indent amount
{\bfseries} % Theorem head font
{.} % Punctuation after theorem head
{.5em} % Space after theorem head
{\thmname{#1}\thmnote{ \bfseries #3}}%%% Thm head spec

\theoremstyle{style}
\newcommand{\comment}[1]{}
\newtheorem{thm}{Theorem}[section]
\newtheorem{cor}[thm]{Corollary}
\newtheorem{prop}[thm]{Proposition}
\newtheorem{lem}[thm]{Lemma}

\newtheorem{asm}[thm]{Assumption}

\newtheorem*{thm*}{Theorem}
\newtheorem*{cor*}{Corollary}
\newtheorem*{prop*}{Proposition}
\newtheorem*{lem*}{Lemma}
\newtheorem*{conj*}{Conjecture}
\newtheorem*{quest*}{Question}
\newtheorem*{claim*}{Claim}
\newtheorem*{ppty*}{Property}

\theoremstyle{theoremnum}

\theoremstyle{definition}
\newtheorem{defn}[thm]{Definition}

\newtheorem{nota}[thm]{Notation}

\theoremstyle{remark}
\newtheorem{rem}[thm]{Remark}
\newtheorem*{rem*}{Remark}

\makeatletter
\def\eqref{\@ifstar\@eqref\@@eqref}
\def\@eqref#1{\textup{\tagform@{\ref*{#1}}}}
\def\@@eqref#1{\textup{\tagform@{\ref{#1}}}}
\makeatother

\DeclareMathOperator{\Spec}{Spec}
\DeclareMathOperator{\Gal}{Gal}

\DeclareMathOperator{\Aut}{Aut}

\DeclareMathOperator{\Hom}{Hom}
\DeclareMathOperator{\Sym}{Sym}

\DeclareMathOperator{\End}{End}

\DeclareMathOperator{\CH}{CH}

\DeclareMathOperator{\rk}{rk}

\DeclareMathOperator{\Frob}{Frob}

\DeclareMathOperator{\rat}{rat}

\DeclareMathOperator{\num}{num}

\DeclareMathOperator{\et}{\acute{e}t}

\DeclareMathOperator{\gr}{gr}

\DeclareMathOperator{\Fil}{Fil}
\DeclareMathOperator{\MOD}{MOD}
\DeclareMathOperator{\HK}{HK}
\DeclareMathOperator{\dR}{dR}
\DeclareMathOperator{\cris}{cris}
\DeclareMathOperator{\Rep}{Rep}
\DeclareMathOperator{\Hdg}{Hdg}

\newcommand{\Z}{{\mathbb{Z}}}

\newcommand{\F}{{\mathbb{F}}}

\newcommand{\Q}{{\mathbb{Q}}}

\newcommand{\Co}{{\mathbb{C}}}

\newcommand{\mf}[1]{\mathfrak{#1}}
\newcommand{\mc}[1]{\mathcal{#1}}

\newcommand{\ol}[1]{\overline{#1}}

\numberwithin{equation}{section}

\title{The Tate conjecture for abelian fourfolds over finite fields}

\author{Matt Broe}

\begin{document}
\linespread{1.3}\selectfont
\begin{abstract}
    We prove the Tate conjecture for abelian fourfolds over finite fields. This is the first resolution of the conjecture for all abelian varieties of a fixed dimension over finite fields since the work of Tate in the 1960s. The proof relies on techniques from Ancona's proof of the standard conjecture of Hodge type for abelian fourfolds, and ultimately reduces to Markman's results on the algebraicity of Weil classes on complex abelian varieties.
    \par
    Combining the above case of the Tate conjecture with theorems of Ancona and Kahn, we deduce that the standard conjecture on homological versus numerical equivalence holds for abelian fourfolds over arbitrary fields. This completes the proof of the standard conjectures for abelian fourfolds. 
\end{abstract}

    \maketitle
    \tableofcontents

    \section{Introduction}
    In this paper we prove the Tate conjecture for abelian fourfolds over finite fields \cite[Conjecture 1]{Tate1965}. Let $k \cong \F_q$ be a finite field of characteristic $p$, and let $\ell$ be a prime invertible in $k$. Let $G_k$ be the absolute Galois group of $k$.
    \begin{thm}\label{thmTate}
        For every abelian fourfold $A$ over $k$, the $\ell$-adic cycle class map
        \begin{gather*}
            \CH^2(A)_{\Q_\ell} \to H^4_{\et}(A_{\ol{k}}, \Q_\ell(2))^{G_k}
        \end{gather*}
        is surjective.
    \end{thm}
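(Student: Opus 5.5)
We reduce, by standard moves, to exotic Tate classes on a CM-type situation and then lift to characteristic zero, where the relevant classes become Weil classes that are algebraic by Markman's theorem.

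\emph{Reduction to exotic classes on simple pieces.} Since the Tate conjecture is insensitive to isogeny and to finite extension of $k$, we may replace $k$ by a finite extension. By Tate's theorem, $A$ is isogenous to a product of simple abelian varieties, and $\End^0(A)$ is known. Tate classes in $H^4$ coming from products of divisor classes are algebraic, by Tate's theorem on $H^2$. So it suffices to treat the space of Tate classes modulo the span of intersections of divisors, the \emph{exotic} Tate classes. Using Honda--Tate theory and the combinatorics of Frobenius weights (the Newton polygon and the multiplicative relations among Frobenius eigenvalues), we enumerate the isogeny types of fourfolds admitting exotic Tate classes in $H^4$. We expect these to be governed by relations $\alpha_1\alpha_2\alpha_3\alpha_4 = q^2$ among eigenvalues that are not forced by pairwise relations $\alpha_i\bar\alpha_i = q$. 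After enlarging $k$, in each case we can find a CM field $E$ acting on $A$ such that the exotic classes lie in a space of Weil-type classes $\bigwedge^{4/[E:\Q]\cdot\ldots}_E H^1$. This mirrors Ancona's case analysis for the Hodge standard conjecture on fourfolds, and we will follow his decomposition via motives of abelian varieties (Kimura finiteness, the Lefschetz and motivic decomposition of $h(A)$).

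\emph{Lifting.} For each such case we lift $A$, together with the $E$-action (and a polarization compatible with it), to a CM abelian variety or a family over a $p$-adic/number ring. We use Serre--Tate/Grothendieck--Messing deformation theory, or Kisin-type lifting results for CM types. We arrange that the exotic Tate classes are specializations of Weil classes on a complex abelian fourfold of Weil type with respect to $E$ (for instance $E$ imaginary quadratic with signature $(2,2)$, or a quartic CM field). Markman's theorem gives the algebraicity of Weil classes on such complex abelian fourfolds (at least when the discriminant condition holds, and in general for the relevant quaternion/imaginary quadratic cases). Specialization of cycles then yields algebraic cycles on $A$ whose classes span the exotic part, possibly after also using the Galois action of $\Gal(E/\Q)$ and the $\End(A)$-module structure to generate the full space from one Weil class.

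\emph{Main obstacle.} The main difficulty is the lifting step in the supersingular and partially supersingular cases. There the space of Tate classes is much larger than any single Weil space. We need to show that it is spanned by (translates under $\End^0(A)^\times$ of) specializations of Weil classes from several different liftings with several different CM fields. Our first attempt is to show, for the group generated by $\End^0(A)^\times$, that the span of one liftable Weil space is invariant under this group, and that the representation on exotic Tate classes is irreducible or has known constituents. The generation then reduces to checking nonvanishing of a projection. For the $p$-adic lifting with compatible $E$-action in the non-ordinary case we would use crystalline realizations and Kisin's integral models.
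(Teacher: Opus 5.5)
Your reduction step is in the right spirit: the paper also confines the exceptional Tate classes to rank-two Weil motives $\mc{W}_K(A)$, with $K\subseteq B$ imaginary quadratic for a suitable CM-structure $B$. The gap is in the lifting step. You assume the CM lift can be chosen so that these classes become Weil classes on a complex fourfold of Weil type, and this fails in general. For any lift $\mc{A}$ of $A$ carrying its CM-structure, $H^4_B(\mc{W}_K(\mc{A}_\Co),\Co)$ has Hodge type $(a,4-a)+(4-a,a)$ with $a\in\{2,3,4\}$, and $a$ is not at your disposal. $\Frob_A$ acts on $\mc{W}_K(A)$ by $q^2$. So if $p$ splits in $K$, weak admissibility of the Hyodo--Kato realization forces $a=2$: otherwise the two $K$-eigenlines would be rank-one filtered $\varphi$-modules with Newton number $2$ but Hodge number $a\neq 2$. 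The case $a=4$ forces $A$ to be supersingular. When $p$ is inert or ramified in $K$, however, $a=3$ is allowed, and Ancona (Proposition A.1 of his paper) constructs fourfolds with $a=3$ for \emph{every} such CM lift. The lifted classes then have type $(3,1)+(1,3)$, so they are not Hodge classes and not algebraic on $\mc{A}_\Co$. Neither Markman's fourfold result nor translation by $\End^0(A)^\times$ or Galois can produce them. (Markman's fourfold result is in fact unconditional, since it gives the Hodge conjecture for all abelian fourfolds; the split/discriminant hypothesis matters only for his sixfold theorem.)

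The missing idea is a stabilization trick. When $a=3$, the same weak-admissibility argument shows that $p$ does not split in $K$. By Deuring there is then an elliptic curve $E$ with CM by $K$ and supersingular reduction $\mc{E}_k$, with quaternionic $\End^0(\mc{E}_k)$ after enlarging $k$. Let $\tau\colon K\hookrightarrow\Co$ be the embedding whose eigenspace in $H^{1,0}(\mc{A}_\Co)$ is $3$-dimensional, and give $E$ the CM type $\ol{\tau}$. The sixfold $\mc{Y}=\mc{A}\times\mc{E}^2$ with diagonal $K$-action satisfies $\mc{W}_K(\mc{Y})\cong\mc{W}_K(\mc{A})\otimes_K\mc{W}_K(\mc{E}^2)$, whose Betti realization has type $(3,3)$. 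By Milne it carries a polarization of split Weil type, so Markman's theorem for split sixfolds makes $H^6_B(\mc{W}_K(\mc{Y}_\Co),\Co(3))$ algebraic, and specialization transfers this to $\mc{W}_K(\mc{Y}_k)(3)$. Since $\mc{E}_k$ is supersingular, $\mc{W}_K(\mc{E}_k^2)\cong\mathbf{1}_K(-1)$, hence $\mc{W}_K(A)(2)\cong\mc{W}_K(\mc{Y}_k)(3)$, which finishes the argument.

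Two smaller points. First, your ``main obstacle'' is misplaced. The supersingular case is classical. Exceptional Galois orbits of size four, which is where a supersingular elliptic factor enters, are eliminated not by a representation-theoretic generation argument but by re-choosing the CM-structure on that factor so that every exceptional orbit has size two (Ancona's Lemmas 7.13--7.16). Second, Ancona's lemmas are phrased in terms of exotic algebraic classes, so citing them verbatim would be circular. They must be re-run with Tate classes; in Lemma 7.16 it suffices that $\Frob_A$ acts by $q^4$ on the relevant exterior squares, and the Tate-class hypothesis gives exactly that.
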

    Tate proved his conjecture for divisors on $A$ \cite{Tate1966}; the conjecture for codimension-three cycles on $A$ follows from the case of divisors, by the classical hard Lefschetz theorem for abelian varieties. Theorem \ref{thmTate} thus settles the Tate conjecture for $A$ in all degrees.
    \par
    Kahn showed that the Tate conjecture for $A$ implies the agreement of rational and numerical equivalence for algebraic cycles with rational coefficients on $A$ \cite[Th\'eor\`eme 1]{Kahn2003}. Combining this with Ancona's proof of the standard conjecture of Hodge type for abelian fourfolds \cite[Theorem 3.18 and Corollary 3.19]{Ancona2021}, we obtain the following corollary of Theorem \ref{thmTate}.
    \begin{cor}\label{corStdIntro}
        Grothendieck's standard conjecture $D$ \cite{Grothendieck1969} holds for every abelian fourfold $A'$ over an arbitrary field $k'$. Namely, if $\ell$ is invertible in $k'$, then $\ell$-adic homological equivalence and numerical equivalence coincide on $A'$.
    \end{cor}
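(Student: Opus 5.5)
The plan is to reduce Corollary \ref{corStdIntro} to the case of abelian fourfolds over finite fields, where Theorem \ref{thmTate} is available. Over finite fields we combine Kahn's theorem with Ancona's result. Then a specialization argument transfers the statement to arbitrary fields.

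First we treat finite fields. Let $A$ be an abelian fourfold over $k \cong \F_q$. By Theorem \ref{thmTate}, together with Tate's theorem for divisors and hard Lefschetz for codimension three, the Tate conjecture holds for $A$ in all degrees. It holds equally over every finite extension of $k$, since such an extension is again a finite field and Theorem \ref{thmTate} applies to the base change.

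Kahn's theorem \cite[Th\'eor\`eme 1]{Kahn2003} then gives that rational and numerical equivalence agree on $\CH^*(A_{\ol{k}})_\Q$. Since homological equivalence lies between rational and numerical equivalence, $\ell$-adic homological and numerical equivalence coincide on $A$ over $\ol{k}$. Over finite extensions of $k$ this follows by pullback to $\ol{k}$ and the projection formula. Alternatively, one can argue directly. The Tate conjecture together with semisimplicity of Frobenius (known for abelian varieties) shows that the Galois-fixed part of cohomology is spanned by algebraic classes. Ancona's Hodge-type standard conjecture for abelian fourfolds \cite[Theorem 3.18, Corollary 3.19]{Ancona2021} makes the intersection pairing on algebraic classes modulo homological equivalence nondegenerate. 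Hence a class numerically equivalent to zero is homologically zero.

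Next we pass to arbitrary fields $k'$. Homological and numerical equivalence are both insensitive to extension of an algebraically closed base field, via a standard spreading-out and specialization argument for cycles. So we may replace $k'$ by the algebraic closure of a finitely generated field. If $\mathrm{char}\, k' = p > 0$, spread $A'$ and a given cycle $Z$ out over a finitely generated $\F_p$-algebra $R$. Suppose $Z$ is numerically trivial but not homologically trivial. Using the specialization morphism on $\ell$-adic cohomology, which is compatible with cycle classes and intersection numbers, we specialize to a closed point with finite residue field. There, Ancona's theorem is the key input. On an abelian fourfold, numerical triviality of a codimension-two class can be tested by pairing against the Lefschetz-type classes coming from the fixed polarization and its powers together with the algebraic cycles in complementary degree. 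The specialization of a nonzero homological class lies in the Tate classes at the closed point, and by the finite field case these are algebraic. The Hodge-type positivity over the generic point, invoked via \cite{Ancona2021}, then forces a nonzero intersection number.

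For characteristic zero we can either use Lieberman's result that $D$ holds for abelian varieties over $\Co$ given the Hodge standard conjecture, or reduce modulo $p$ as above. I expect the delicate point to be this specialization step. A homologically nontrivial algebraic class at the generic point must pair nontrivially with some algebraic cycle defined over the generic point, not merely at the special fibre. I would first try to argue purely through Ancona's results. Over any field, the Hodge standard conjecture implies that $D$ holds if and only if the space of algebraic classes modulo homological equivalence has the dimension predicted by the numerical one. That equality can be checked after specialization, because homological-equivalence classes inject under specialization and Theorem \ref{thmTate} controls dimensions at the special fibre. Failing this, I would appeal directly to the framework of Ancona--Kahn, which shows that the Tate conjecture for abelian varieties over finite fields of a given dimension implies $D$ in that dimension over all fields.
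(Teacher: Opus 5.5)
Your outline matches the paper's. Theorem \ref{thmTate} together with \cite[Th\'eor\`eme 1]{Kahn2003} gives $\sim_\ell \ = \ \sim_{\num}$ over finite fields (Corollary \ref{corMotConsequences}(i)); one then spreads out and passes from closed fibres to the generic fibre using Ancona. \textbf{The gap is that this last passage, which is the whole content of the corollary beyond the finite-field case, is not proved, and both arguments you sketch for it fail.}

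\emph{Positivity \enquote{over the generic point}.} This is vacuous. Ancona's Hodge-type theorem concerns cycles modulo \emph{numerical} equivalence (Remark \ref{remStdHodge}), so it says nothing about a numerically trivial $Z$.

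\emph{The dimension count.} Specialization is injective on homological classes, and $\sim_\ell \ = \ \sim_{\num}$ holds on the special fibre. Neither fact tells you that the intersection pairing, restricted to the image of the generic classes, is nondegenerate, and that nondegeneracy is exactly what is at stake.

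\emph{Tate classes.} Observing that the specialization of $[Z]$ is a Tate class, hence algebraic, adds nothing: it is already the class of the specialized cycle.

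\textbf{The missing idea} is to use positivity on the \emph{special} fibre in its \emph{homological} form. It is available there precisely because Ancona's numerical Hodge-type theorem can be combined with $\sim_\ell \ = \ \sim_{\num}$ on $\mc{A}'_{\ol{s}}$. The argument runs as follows.
\begin{enumerate}[(i)]
\item Take $L$ from a relatively ample divisor $\Theta$. The operator $\Lambda$ is algebraic on the generic fibre \cite[Theorem 2A11]{Kleiman1968} and commutes with specialization.
\item Hence each primitive component $x_j$ of $x = \mathrm{sp}[Z]$ is the specialization of the class of a cycle $Z_j = (\Gamma_j)_* Z$, where $\Gamma_j$ is a correspondence on the generic fibre. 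So $Z_j$ is again numerically trivial.
\item Therefore $\langle x_j, L^{4-2i_j} x_j\rangle = \deg(Z_j\cdot\Theta^{4-2i_j}\cdot Z_j) = 0$.
\item Definiteness of the Hodge--Riemann form on primitive algebraic classes of $\mc{A}'_{\ol{s}}$ then forces $x_j = 0$. Hence $x = 0$, and $[Z] = 0$.
\end{enumerate}

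This transfer is what \cite[Corollary 3.19]{Ancona2021} packages. The paper's proof is just three steps: spread out over a finite-type $\Z[1/\ell]$-scheme; apply Corollary \ref{corMotConsequences}(i) at a closed point; invoke that corollary. Since every closed point of such a scheme has finite residue field, characteristic zero needs no separate treatment, though your appeal to Lieberman there is fine. Your final fallback amounts to citing this result. Note, though, that it rests on Ancona's Hodge-type theorem for fourfolds, so it is not a dimension-independent consequence of the Tate conjecture as you phrase it.

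Separately, your \enquote{alternative} finite-field argument misattributes a step. Nondegeneracy modulo homological equivalence is conjecture $D$ itself and does not come from Ancona. It follows instead from the Tate conjecture plus semisimplicity of Frobenius, by Tate's classical argument.
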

    Together with the results of Lieberman \cite[Theorem 2A11]{Kleiman1968} and Ancona, Corollary \ref{corStdIntro} completes the proof of all of the standard conjectures for abelian fourfolds. It also strengthens Clozel's theorem, which states that for an abelian variety $V$ over the finite field $k$, $\ell$-adic homological and numerical equivalence coincide on $V$ for a set of primes $\ell$ of positive density \cite[Th\'eor\`eme 1 and Corollaire 1]{Clozel1999}. 

    \begin{rem}\label{remStdHodge}
        Here \enquote{the standard conjecture of Hodge type for $X$} refers to a variation on the conjecture denoted by $\Hdg(X)$ in \cite{Grothendieck1969}. The former conjecture concerns cycles modulo numerical equivalence, while Grothendieck's original formulation concerns cycles modulo homological equivalence. In the presence of the standard conjecture $D$ for $X$, there is no distinction between the two formulations. Note also that in characteristic zero, the conjecture $\Hdg(X)$ is classically known for all smooth projective varieties. 
        \par
        The original conjecture $\Hdg(X)$, together with the standard conjecture of Lefschetz type for $X$, implies the standard conjecture $D$ for $X$. However, this implication is not preserved when $\Hdg(X)$ is replaced by the standard conjecture of Hodge type in the above sense. 
    \end{rem}

    \subsection*{Prior work on the Tate and Hodge conjectures}
    
    Abelian fourfolds have long been a focus of research on the Tate and Hodge conjectures. Pohlmann \cite{Pohlmann1968} presented an example, due to Mumford, of an abelian fourfold over $\Co$ whose Betti cohomology contained exceptional Hodge classes, i.e. Hodge classes which are not contained in the span of products of divisor classes. Weil \cite{Weil1979} generalized Mumford's example by defining what are now called abelian varieties of Weil type, which generically possess certain exceptional Hodge classes, known as Weil classes. Results of Tankeev \cite{Tankeev1982} and Moonen--Zarhin \cite{MZ1999} later reduced the Hodge conjecture for all abelian varieties of dimension $\le 5$ to the algebraicity of Weil classes on abelian fourfolds of Weil type. The latter statement was finally proved by Markman in \cite[Corollary 1.6.1]{Markman2025}.  
    \par
    Milne showed that the Hodge conjecture for CM abelian varieties implies the Tate conjecture for abelian varieties over finite fields \cite{Milne1999b}. By reworking the method of Lefschetz groups used in loc. cit., he later obtained unconditional cases of the Tate conjecture \cite{Milne2022}. However, we do not know whether it is possible to prove the statement of Theorem \ref{thmTate} solely by combining Markman's results with arguments based on Lefschetz groups. See also \cite{APFV2025} for further background and results on the Tate conjecture for abelian varieties over finite fields. 

    \subsection*{Sketch of the proof of Theorem \ref{thmTate}}
    As mentioned earlier, Ancona \cite{Ancona2021} established the standard conjecture of Hodge type for abelian fourfolds, using tools from $p$-adic Hodge theory and the theory of Chow motives. A large part of the proof of Theorem \ref{thmTate} draws on ideas from Ancona's work. 
    \par
    Given an abelian fourfold $A$ over $k$ (subject to the technical Assumption \ref{asmExt}, which does not affect the generality of the argument), we use the extra endomorphisms of $A$ to decompose the Chow motive of $A$ into smaller summands. The exceptional Tate classes of $A$ are shown to be supported on certain rank-two summands, corresponding to imaginary quadratic subfields $K \subseteq \End^0(A)$ (Corollary \ref{corExoticSubWeil}). We denote such a summand by $\mc{W}_K(A)$, and refer to it as an exceptional Weil motive. The argument which confines the exceptional classes to these summands is mostly adapted from \cite[Section 7]{Ancona2021}, but requires some care to remove hypotheses in loc. cit. on the existence of algebraic classes.
    \par
    Fix an isomorphism $\ol{\Q}_p \cong \Co$. Up to isogeny, we may lift $A$, together with the decomposition of its motive, to a CM abelian fourfold $\mc{A}_\Co$ over the complex numbers. We write $\mc{W}_K(\mc{A}_\Co)$ for the summand of the motive of $\mc{A}_\Co$ which specializes to $\mc{W}_K(A)$. One may hope to use Markman's results to show that the Betti cohomology of $\mc{W}_K(\mc{A}_\Co)(2)$ is spanned by algebraic classes, and then specialize to deduce that the $\ell$-adic cohomology of $\mc{W}_K(A)(2)$ is similarly spanned by algebraic classes. If $H^4_B(\mc{W}_K(\mc{A}_\Co), \Co)$ has pure Hodge type $(2, 2)$, then this approach goes through as described. However, we must also treat the cases where $H^4_B(\mc{W}_K(\mc{A}_\Co), \Co)$ has Hodge type $(4, 0) + (0, 4)$, or $(3,1) + (1,3)$. In the former case, $p$-adic Hodge-theoretic arguments of Ancona show that $A$ is supersingular, in which case the Tate conjecture for $A$ is well-known. 
    \par
    In the case of Hodge type $(3, 1) + (1, 3)$, some more work is required. One cannot in general reduce from this case to the previous two: indeed, Ancona constructed examples where $H^4_B(\mc{W}_K(\mc{A}_\Co), \Co)$ has Hodge type $(3, 1) + (1, 3)$ for \textit{every} choice of CM lift $\mc{A}_\Co$ of $A$ obtained as above \cite[Proposition A.1]{Ancona2021}. Instead, we again use $p$-adic Hodge theory to show that there is an elliptic curve $E$ over $\ol{\Q}_p$ with complex multiplication by $K$ and supersingular reduction (Lemma \ref{lemPNonSplit}). Then $\mc{A}_\Co \times_\Co E^2$ can be equipped with a polarization $\lambda$ whose determinant is equal to $-1$, up to multiplication by the norm of an element of $K^\times$ \cite[Proposition 1.6]{Milne2022}. In other words, $(\mc{A}_{\Co} \times_\Co E^2, K, \lambda)$ is a polarized abelian sixfold of split Weil type. Markman proved the algebraicity of Weil classes on such sixfolds \cite[Theorem 1.5.1]{Markman2025}, and via specialization we find that the $\ell$-adic cohomology of $\mc{W}_K(A_{\ol{k}}) \otimes_K \mc{W}_K(E_{\ol{k}}^2)(3)$ is spanned by algebraic classes. But $\mc{W}_K(E_{\ol{k}}^2)(1)$ is the unit object in the category of motives over $\ol{k}$ with $K$-coefficients, so we conclude that the $\ell$-adic cohomology of $\mc{W}_K(A_{\ol{k}})(2)$ is spanned by algebraic classes. Thus the exceptional Tate classes on $A$ are algebraic, which implies Theorem \ref{thmTate}.

    \begin{rem}\label{remStabilize}
        The idea of taking a product of $\mc{A}_\Co$ with a power of a suitable CM elliptic curve to obtain a higher-dimensional abelian variety of Weil type has a precedent in \cite[Proposition 1.10]{Milne2022}. It can also be viewed as related to a technique used in Markman's proof of the algebraicity of Weil classes on a complex abelian fourfold $V$ of Weil type. Markman first proves the algebraicity of Weil classes for abelian sixfolds of split Weil type \cite[Theorem 1.5.1]{Markman2025}, then notes that there exists a suitable abelian surface $S$ such that $V \times_\Co S$ is of split Weil type. He then deduces the desired algebraicity statement for $V$ from a result of Schoen \cite[Proposition 10]{Schoen1998}. See \cite[Section 11.5]{Markman2026} for background. 
    \end{rem}

    \begin{rem}
        Note that the preprint \cite[Remark 1.18]{Jiang2025} also indicates an argument to reduce certain cases of the Tate conjecture for abelian fourfolds in positive characteristic to Markman's results, including the case of ordinary abelian fourfolds.
    \end{rem}

    \subsection*{Structure of the paper}
    In Section \ref{secNotation} we fix some global notation and conventions. In Section \ref{secDecomp} we review the decomposition of a motive induced by the action of a finite \'etale algebra, and define Weil motives. In Section \ref{secCmStruct} we introduce CM-structures, which are certain maximal commutative subalgebras of $\End^0(A)$, for $A$ an abelian variety over a finite field. We also describe the decomposition of the motive of $A$ induced by a CM-structure. In Section \ref{secRedWeilMotive} we show that the exceptional Tate classes on an abelian fourfold $A$ over a finite field are supported on certain Weil motives. In Section \ref{secLift} we recall that $A$ can be lifted (up to isogeny) to a CM abelian variety in characteristic zero, and explain the specialization of algebraic classes from characteristic zero to characteristic $p$. In Section \ref{secProofThmTate} we complete the proof of Theorem \ref{thmTate}. In Section \ref{secStd} we present some consequences of Theorem \ref{thmTate}, including the application to the standard conjecture $D$. In Appendix \ref{appHK} we collect properties of Hyodo--Kato cohomology which are needed elsewhere in the paper.

    \section{Notation and conventions}\label{secNotation}
    In the body of the paper, the letter $k$ will denote a field, arbitrary unless otherwise stated. We let $\ol{k}$ denote a fixed algebraic closure of $k$, and let $G_k = \Aut(\ol{k} / k)$ denote the absolute Galois group of $k$. The letter $p$ will denote a fixed prime, and $q = p^f$ will denote a positive integer power of $p$.
    \subsection{Chow motives}
    We refer to \cite{Andre2004} for background on Chow motives (see \cite[Tag 0FG9]{stacks-project} for a useful quick reference). Our conventions for tensor categories are as in \cite{Andre2005}, which also serves as a reference for facts about finite-dimensional motives, in the sense of Kimura \cite{Kimura2005}.
    \par
    The category of \textbf{motives} modulo rational equivalence over a field $k$, with coefficients in a characteristic zero field $L$, is denoted by $\mc{M}(k)_L$. We write $\mf{h}(X)_L \in \mc{M}(k)_L$ for the motive of a smooth projective variety $X$ over $k$. The monoidal unit in this category is denoted by $\mathbf{1}$ (or $\mathbf{1}_L$, when we want to make the coefficient field explicit). For a motive $M \in \mc{M}(k)_L$, and $r \in \Z$, the \textbf{Chow group} $\CH^r(M)_L$ is defined to be $\Hom_{\mc{M}(k)_L}(\mathbf{1}(-r), M)$. 
    \par
    The space of \textbf{algebraic cycles} in $M$ is defined to be $\CH^0(M)_L$. Given a realization functor $R$ on $\mc{M}(k)_L$ induced by a Weil cohomology theory with coefficients in a characteristic zero field $F$, the \textbf{algebraic classes} in $R(M)$ are defined to be the elements of the $F$-linear span of the image of $\CH^0(M)_L$ under $R$. When the classical realizations of $M$ \cite[\S3.4]{Andre2004} are each concentrated in a single degree $d$, the \textbf{rank} $\rk M$ of $M$ is defined as the dimension of any classical realization of $M$ \cite[Definition 4.3]{Ancona2021}. This definition is independent of the choice of realization by \cite[Th\'eor\`eme 4.2.5.2]{Andre2004}. If $\rk' M$ denotes the rank of $M$ in the sense of \cite{Andre2005}, then $\rk M = (-1)^d \rk' M$.
    \par
    If $L' / L$ is a finite extension, we may implicitly consider $\mc{M}(k)_{L'}$ as the category of $L'$-module objects in $\mc{M}(k)_L$ when convenient. In particular, there is a canonical forgetful functor $\mc{M}(k)_{L'} \to \mc{M}(k)_L$. For $M \in \mc{M}(k)_L$, we let $M_{L'} = M \otimes_L L' \in \mc{M}(k)_{L'}$ denote its scalar extension to $L'$. When $L = \Q$, we may sometimes omit the subscript $L$ on all notation related to motives. 

    \subsection{Relative motives}
    For a discrete valuation ring $R$, we similarly consider the category of relative Chow motives $\mc{M}(R)_L$ over $R$ (as in \cite[Section 5.1]{OSullivan2011}), and define associated notation analogously to the case of motives over a field. 
    
    \subsection{Artin motives and motives of abelian type}
    Let $L$ be a field of characteristic zero, and let $\Rep_L G_k$ denote the category of continuous $G_k$-representations on finite-dimensional $L$-vector spaces. Here the vector spaces are equipped with the discrete topology. For $V \in \Rep_L G_k$, we let $\mf{h}(V) \in \mc{M}(k)_L$ denote the associated Artin motive \cite[Exemples 4.1.6.1]{Andre2004}. The functor $V \mapsto \mf{h}(V)$ defines an equivalence of categories between $\Rep_L G_k$ and the full subcategory of $\mc{M}(k)_L$ generated by Artin motives. The category of motives of abelian type over $k$ with coefficients in $L$ is defined to be the thick rigid tensor subcategory of $\mc{M}(k)_L$ spanned by the Artin motives together with the motives of abelian varieties over $k$. Motives of abelian type are finite-dimensional \cite[Example 9.1]{Kimura2005}.

    \subsection{Realizations}
    The letter $\ell$ will denote a fixed prime invertible in $k$. For a smooth projective variety $X$ over $k$ and an algebraic extension $F / \Q_\ell$, we let $H^\bullet_{\et}(X_{\ol{k}}, F)$ denote the $\ell$-adic cohomology of $X$. When $k = \Co$ and $F'$ is a field, we let $H^\bullet_{B}(X, F')$ denote Betti cohomology of $X$. For a motive $M$, we similarly let $H^\bullet_{\et}(M, F)$ and $H^\bullet_B(M, F')$ denote the \'etale and Betti realizations of $M$.

    \subsection{Additional notation}
    For a scheme $X$ over $\F_q$, the notation $\Frob_X$ will mean the $q$-power Frobenius endomorphism of $X$. For an abelian variety $A$ over $k$, we define $\End^0(A) = \End_k(A) \otimes_\Z \Q$, where $\End_k(A)$ is the ring of endomorphisms of $A$ over $k$. For a set $S$ and integer $n \ge 0$, we let $\binom{S}{n}$ denote the set of $n$-element subsets of $S$. Subscripts on schemes (or morphisms of schemes) denote base change.

    \section{Weil motives}\label{secDecomp}
    In this section we review motivic decompositions induced by the action of a finite \'etale algebra. In particular, given an abelian variety $A$ and an embedding $K \hookrightarrow \End^0(A)$, with $K$ a quadratic number field, we construct an associated summand $\mc{W}_K(A)$ of $\mf{h}(A)$, called the Weil motive. When $k = \Co$, Weil classes live in the Betti cohomology of Weil motives. 
    \par
    Let $F$ be a field of characteristic zero, let $B$ be a finite \'etale $F$-algebra, and let $L/F$ be a field extension which splits $B$. In other words, we have a canonical isomorphism of $B$-algebras 
    \begin{gather}\label{eqFetAlgSplit}
        B \otimes_F L \cong \prod\limits_{\sigma \in \Sigma} L_\sigma,
    \end{gather}
    where $\Sigma = \Hom_{F\text{-alg}}(B, L)$, and $L_\sigma$ is a copy of $L$ equipped with $B$-algebra structure via $\sigma$. 
    \par
    Because $B$ is finite \'etale over $F$, the tensor product $F$-algebra $B \otimes_F B$ has a canonical direct factor $B_{\Delta} \cong B$. The corresponding idempotent $e_{\Delta}$ is the indicator function of the open and closed diagonal $\Spec B \hookrightarrow \Spec(B) \times_F \Spec(B)$.

    \subsection{Decomposition of $B$-module motives}
    A $B$-module object of an $F$-tensor category $C$ is an object $M$ of $C$ equipped with an $F$-algebra map $B \to \End_C(M)$. We begin by describing how a $B$-module motive naturally decomposes after extending scalars to $L$.
    \par
    Let $\mc{M}(k)_B$ denote the category of $B$-module objects in $\mc{M}(k)_F$, which we also refer to as $B$-module motives. If $M \in \mc{M}(k)_B$ is a $B$-module motive, then the idempotents splitting off factors from the product \eqref{eqFetAlgSplit} determine a canonical $B$-equivariant decomposition in $\mc{M}(k)_L$
    \begin{gather}\label{eqFToLDecomp}
        M \otimes_F L \cong \bigoplus\limits_{\sigma \in \Sigma} M_\sigma,
    \end{gather}
    where $B$ acts on $M_\sigma$ via $\sigma$. If $L/F$ is finite Galois, then the decomposition \eqref{eqFToLDecomp} is $\Gal(L / F)$-equivariant when considered as an isomorphism in $\mc{M}(k)_F$. Here $g \in \Gal(L / F)$ acts on the right-hand side by sending $M_\sigma$ to $M_{g \circ \sigma}$. Note that this action is not $L$-linear, but rather $L$-semilinear.
    \par
    If $M' \in \mc{M}(k)_B$ is another $B$-module motive, then we define the tensor product $M \otimes_B M' \in \mc{M}(k)_B$ via the projector $e_\Delta$ acting on the $(B \otimes_F B)$-module motive $M \otimes_F M'$. This equips $\mc{M}(k)_B$ with the structure of a symmetric monoidal category. We may more generally define Schur functors valued in $\mc{M}(k)_B$, such as symmetric and exterior powers, by applying the appropriate projectors to tensor powers of $B$-module motives (see \cite[Section 3.1]{Andre2004}). 
    \begin{prop}\label{propEigenMonoidal}
        For $M, M' \in \mc{M}(k)_B$ and $\sigma \in \Sigma$, there is a canonical isomorphism in $\mc{M}(k)_L$
        \begin{gather}\label{eqEigenMonoidal}
            (M \otimes_B M')_\sigma \cong M_\sigma \otimes_L M'_\sigma,
        \end{gather}
        compatible with the symmetry of the monoidal products $\otimes_B$ and $\otimes_L$. 
    \end{prop}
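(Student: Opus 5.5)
The plan is to reduce the claim to a statement about idempotents in $B \otimes_F B \otimes_F L$ and then transport it to motives using the fact that everything is cut out by projectors. The first step is to decompose both sides over $\Sigma\times\Sigma$. Extending scalars, $M\otimes_F M'$ becomes a $(B\otimes_F B)$-module motive, and $(M\otimes_F M')\otimes_F L \cong (M\otimes_F L)\otimes_L (M'\otimes_F L)$ canonically, since scalar extension is monoidal. Applying \eqref{eqFToLDecomp} to each factor gives
\begin{gather*}
(M\otimes_F M')\otimes_F L \cong \bigoplus_{\sigma,\tau\in\Sigma} M_\sigma\otimes_L M'_\tau,
\end{gather*}
where $B\otimes_F B$ acts on the $(\sigma,\tau)$ summand through $\sigma\otimes\tau$. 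In particular, the idempotents of $(B\otimes_F B)\otimes_F L\cong\prod_{(\sigma,\tau)} L_{(\sigma,\tau)}$ project onto these summands.

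The second step is to identify $e_\Delta$. Its image in $(B\otimes_F B)\otimes_F L$ is the indicator function of the diagonal $\{(\sigma,\sigma)\}\subset\Sigma\times\Sigma$. To see this, note that $\Spec$ of the split algebra is the finite set $\Sigma\times\Sigma$, and base change to $L$ sends the diagonal $\Spec B\hookrightarrow \Spec B\times_F\Spec B$ to the diagonal of $\Sigma$. Hence
\begin{gather*}
(M\otimes_B M')\otimes_F L = e_\Delta\bigl((M\otimes_F M')\otimes_F L\bigr)\cong\bigoplus_{\sigma} M_\sigma\otimes_L M'_\sigma .
\end{gather*}
The $B$-action on $M\otimes_B M'$ comes from either factor; on the summand $M_\sigma\otimes_L M'_\sigma$ it is via $\sigma$. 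The $\sigma$-isotypic part of this decomposition, which is what $(M\otimes_B M')_\sigma$ is by definition, is therefore $M_\sigma\otimes_L M'_\sigma$. Every isomorphism used here is canonical, being induced by idempotents and by the monoidal structure of scalar extension, so \eqref{eqEigenMonoidal} is canonical as well.

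The third step is compatibility with the symmetry. The commutativity constraint $c\colon M\otimes_F M'\to M'\otimes_F M$ intertwines the $(B\otimes_F B)$-actions via the swap automorphism $s$ of $B\otimes_F B$. Since $s$ fixes $e_\Delta$, the constraint $c$ restricts to the symmetry of $\otimes_B$. After base change, $s$ sends the $(\sigma,\tau)$-idempotent to the $(\tau,\sigma)$-idempotent and so preserves the diagonal ones. On the $(\sigma,\sigma)$ summand, $c$ is then the commutativity constraint of $\otimes_L$ restricted to $M_\sigma\otimes_L M'_\sigma$, which is the required compatibility.

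The main obstacle is bookkeeping rather than anything conceptual. One has to check that the decomposition of $(M\otimes_F M')\otimes_F L$ obtained from $(B\otimes_F B)\otimes_F L$ agrees with the tensor product of the two individual decompositions, that is, that the idempotent $e_\sigma\otimes e_\tau$ is the $(\sigma,\tau)$-idempotent. This follows from the algebra isomorphism $(B\otimes_F L)\otimes_L(B\otimes_F L)\cong (B\otimes_F B)\otimes_F L$, under which the idempotent $e_\sigma\otimes e_\tau$ on the left corresponds to the $(\sigma,\tau)$-idempotent on the right. Because all the objects are in the pseudo-abelian category $\mc{M}(k)_L$, images of idempotents exist, and no further motivic input is needed.
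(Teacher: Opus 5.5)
Your proof is correct, but it takes a different route from the paper's. The paper first assumes $L/F$ is finite. It then identifies each eigen-summand as a base change, $M_\sigma \cong M \otimes_B \mf{h}(L_\sigma)$, where $\mf{h}(L_\sigma)$ is the Artin motive of the trivial representation on $L_\sigma$, with $B$ acting through $\sigma$. The isomorphism then follows from formal associativity of bimodule tensor products together with $\mf{h}(L_\sigma)\otimes_L \mf{h}(L_\sigma)\cong \mf{h}(L_\sigma)$. Infinite $L$ is handled afterwards by passing to a finite splitting subextension and checking that the result is independent of that choice.

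You instead compute directly with idempotents. Your key inputs are $e_\Delta \otimes 1 = \sum_\sigma e_{(\sigma,\sigma)}$ in $(B\otimes_F B)\otimes_F L \cong \prod_{\Sigma\times\Sigma} L$ and $e_\sigma\otimes e_\tau = e_{(\sigma,\tau)}$. This has several advantages:
\begin{enumerate}[(i)]
\item It works uniformly for any splitting field $L$, with no Artin motives (which need $[L:F]<\infty$) and no limiting step.
\item It produces the full decomposition $(M\otimes_B M')\otimes_F L \cong \bigoplus_\sigma M_\sigma\otimes_L M'_\sigma$ in one stroke.
\item It makes the symmetry compatibility transparent, since the swap of $B\otimes_F B$ fixes $e_\Delta$ and permutes the idempotents $e_{(\sigma,\tau)}$, fixing the diagonal ones.
\end{enumerate}
The paper's route is more formal. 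It essentially hides your idempotent bookkeeping inside the ``unwinding definitions'' step giving $M_\sigma\cong M\otimes_B\mf{h}(L_\sigma)$, and pays for this with the finite/infinite split.
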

    \begin{proof}
        For now, assume that $L / F$ is finite. For $\sigma \in \Sigma$, write $\mf{h}(L_\sigma) \in \mc{M}(k)_F$ for the Artin motive associated with the trivial $G_k$-representation on the underlying $F$-vector space of $L_\sigma$. We equip $\mf{h}(L_\sigma)$ with a $B$-action via $\sigma$, and with the $L$-action induced by the natural identification $L_\sigma \cong L$. In particular, $\mf{h}(L_\sigma)$ is isomorphic in $\mc{M}(k)_L$ to the unit motive. Then by unwinding definitions, we have a canonical isomorphism
        \begin{gather*}
            M_\sigma \cong M \otimes_B \mf{h}(L_\sigma) \in \mc{M}(k)_L.
        \end{gather*}
        Using this, we calculate
        \begin{align}
            M_\sigma \otimes_L M'_{\sigma} &\cong (M \otimes_B \mf{h}(L_\sigma)) \otimes_L (M' \otimes_B \mf{h}(L_\sigma)) \nonumber \\
            &\cong (M \otimes_B \mf{h}(L_\sigma)) \otimes_L (\mf{h}(L_\sigma) \otimes_B M') \nonumber \\
            &\cong M \otimes_B (\mf{h}(L_\sigma) \otimes_L \mf{h}(L_\sigma)) \otimes_B M' \label{eqFLTensorAssoc} \\
            &\cong M \otimes_B \mf{h}(L_\sigma) \otimes_B M' \nonumber \\
            &\cong (M \otimes_B M') \otimes_B \mf{h}(L_\sigma) \nonumber \\
            &\cong (M \otimes_B M')_\sigma. \nonumber
        \end{align}
        To obtain \eqref{eqFLTensorAssoc} in the above computation, we use the associativity isomorphism for tensor products of bimodule objects in tensor categories (see e.g. the proof of \cite[Tag 00D2]{stacks-project}). Similar calculations imply the claimed compatibility with symmetry.
        \par
        The case where $L / F$ is infinite follows from the finite case, by choosing a finite subextension $L'/F$ of $L / F$ which splits $B$, and extending scalars to $L$. It is straightforward to check that the resulting isomorphism \eqref{eqEigenMonoidal} does not depend on this choice, by passing to the compositum in $L$ of a different such choice with $L'$.
    \end{proof}

    \begin{cor}\label{corSymEigen}
        For $M \in \mc{M}(k)_B$, $\sigma \in \Sigma$ and $n \ge 1$, there is a canonical isomorphism
        \begin{gather*}
            (\Sym^n_B M)_\sigma \cong \Sym^n_L M_\sigma
        \end{gather*}
        in $\mc{M}(k)_L$, where $\Sym^n$ denotes the symmetric power.
    \end{cor}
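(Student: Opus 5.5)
The plan is to deduce the corollary from Proposition \ref{propEigenMonoidal} by iterating it and then passing to invariants under the symmetric group. Recall that $\Sym^n_B M$ is the image of the symmetrizing idempotent
\begin{gather*}
    \pi_n = \frac{1}{n!}\sum_{s \in S_n} s
\end{gather*}
acting on the $n$-fold tensor power $M^{\otimes_B n}$. Here $S_n$ acts through the symmetry isomorphisms of the monoidal structure $\otimes_B$. Likewise, $\Sym^n_L M_\sigma$ is the image of $\pi_n$ acting on $M_\sigma^{\otimes_L n}$ via the symmetry of $\otimes_L$.

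The first step is to show, by induction on $n$, that Proposition \ref{propEigenMonoidal} yields a canonical isomorphism
\begin{gather*}
    \phi_n \colon (M^{\otimes_B n})_\sigma \xrightarrow{\sim} M_\sigma^{\otimes_L n}.
\end{gather*}
One writes $M^{\otimes_B n} = M^{\otimes_B (n-1)} \otimes_B M$ and applies \eqref{eqEigenMonoidal} to this pair, together with the inductive hypothesis. Since $M^{\otimes_B(n-1)}$ is again a $B$-module motive, the proposition applies to it.

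The second step is to check that $\phi_n$ is $S_n$-equivariant. The group $S_n$ is generated by adjacent transpositions. Each such transposition acts on both sides by the symmetry isomorphism applied to two adjacent factors, tensored with identities on the remaining factors. The compatibility with symmetry asserted in Proposition \ref{propEigenMonoidal} gives equivariance for a single transposition. To extend this to all factors at once, one also needs $\phi_n$ to be compatible with the associativity constraints, so that the position at which we split the tensor product does not matter. I expect this coherence check to be the only real obstacle. I would handle it by going back to the construction in the proof of Proposition \ref{propEigenMonoidal}: there, all isomorphisms are built from associativity and unit isomorphisms of bimodule tensor products together with $\mf{h}(L_\sigma)\otimes_L \mf{h}(L_\sigma)\cong \mf{h}(L_\sigma)$. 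Consequently $(-)_\sigma = -\otimes_B \mf{h}(L_\sigma)$ is a symmetric monoidal functor from $\mc{M}(k)_B$ to $\mc{M}(k)_L$, and the compatibility with associativity follows by the same unwinding, or from Mac Lane coherence.

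Finally, the functor $(-)_\sigma$ is additive and $F$-linear, being the image of an idempotent after scalar extension. Since $\phi_n$ is $S_n$-equivariant, it commutes with $\pi_n$, so it intertwines $(\pi_n)_\sigma$ with $\pi_n$. Taking images of these idempotents, which exist because the motive categories are pseudo-abelian, gives
\begin{gather*}
    (\Sym^n_B M)_\sigma = \operatorname{im}\bigl((\pi_n)_\sigma\bigr) \cong \operatorname{im}(\pi_n) = \Sym^n_L M_\sigma.
\end{gather*}
This isomorphism is canonical because $\phi_n$ is.
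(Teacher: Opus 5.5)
Your proposal is correct and takes essentially the same route as the paper. You iterate Proposition \ref{propEigenMonoidal} to get a canonical isomorphism $(\otimes^n_B M)_\sigma \cong \otimes^n_L M_\sigma$ compatible with permuting the factors, and then pass to the images of the symmetrizing projectors; your extra check that $(-)_\sigma$ respects the associativity constraints makes explicit a coherence point that the paper's one-line proof leaves implicit.
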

    \begin{proof}
        Using Proposition \ref{propEigenMonoidal}, we find that there is a canonical isomorphism
        \begin{gather*}
            (\otimes_B^n M)_\sigma \cong \otimes_L^n (M_\sigma),
        \end{gather*}
        compatible with applying the same permutation to the tensor factors on both sides. It is thus compatible with applying the projector which cuts out the symmetric power from the tensor power to both sides.
    \end{proof}

    \subsection{Decomposition of motives of abelian varieties}
    We next recall the canonical decomposition of the motive of an abelian variety. If $A$ is an abelian variety over $k$, we write $\mf{h}^n(A)$ for the $n$-th Chow--K\"unneth component of the Deninger--Murre decomposition of $\mf{h}(A) \in \mc{M}(k)_\Q$ \cite{DM1991}. The intersection product gives a canonical isomorphism 
    \begin{gather}\label{eqSymAlgH1}
        \mf{h}^\bullet(A)_F \cong \Sym^\bullet \mf{h}^1(A)_F
    \end{gather}
    of graded algebras in $\mc{M}(k)_F$ (\cite{Kunnemann1994}, \cite[Theorem 4.1]{Ancona2021}). On the other hand, if $R: \mc{M}(k)_F \to C$ is a realization functor induced by a classical Weil cohomology theory, valued in a tensor category $C$, then \eqref{eqSymAlgH1} induces a canonical isomorphism
    \begin{gather*}
        R(\mf{h}^\bullet(A)_F) \cong {\bigwedge}^\bullet R(\mf{h}^1(A)_F)
    \end{gather*}
    of graded algebras in $C$ \cite[Footnote 4]{Andre2005}. Here $\bigwedge^\bullet$ denotes the exterior algebra.

    \subsection{Weil motives}
    We now define Weil motives, which are certain submotives of abelian varieties associated with actions of quadratic number fields. 
    \par
    Set $F = \Q$. Let $K$ denote a quadratic number field, which will play the role of the finite \'etale algebra $B$ from the earlier discussion. Let $L$ be a field into which $K$ embeds, and fix a conjugate pair of embeddings $\tau, \ol{\tau}: K \hookrightarrow L$. Let $A$ be an abelian variety over $k$ of dimension $2n$, equipped with an embedding $\iota: K \hookrightarrow \End^0(A)$. 
    \par
    \begin{defn}\label{defWeilMotive}
        Define the \textbf{Weil motive} associated with the embedding $\iota$ via
        \begin{gather*}
            \mc{W}_K(A) = \Sym^{2n}_K \mf{h}^1(A) \in \mc{M}(k)_K.
        \end{gather*}
    \end{defn}
    \begin{rem}
        One may compare Definition \ref{defWeilMotive} to Agugliaro's notion of \enquote{Weil--Tate motive} \cite[Remark 2.25 and Definition 6.5]{Agugliaro2026}. The difference is that we allow the base field $k$ to be arbitrary, and we do not assume that the $\ell$-adic cohomology of $\mc{W}_K(A)$ consists of potential Tate classes. 
    \end{rem} 
    
    \begin{rem}\label{remWeilForgetToQ}
        We collect some basic observations and conventions concerning Definition \ref{defWeilMotive}.
        \begin{enumerate}[(i)]
            \item As an object of $\mc{M}(k)_K$, the Weil motive $\mc{W}_K(A)$ has rank one. We may implicitly forget the $K$-module structure and regard $\mc{W}_K(A)$ as an object of $\mc{M}(k)_\Q$, so that it has rank two (over $\Q$).
            \item The motive $\mc{W}_K(A)$ is naturally a summand of $\mf{h}^{2n}(A)$ in $\mc{M}(k)_\Q$, by regarding $\otimes_K^{2n} \mf{h}^1(A)$ as a summand of $\otimes_\Q^{2n}\mf{h}^1(A)$, and applying the symmetric power projector to both.
            \item Corollary \ref{corSymEigen} yields a canonical isomorphism in $\mc{M}(k)_L$
            \begin{gather*}
                \mc{W}_K(A) \otimes_\Q L \cong \Sym^{2n}_L \mf{h}^1(A)_\tau \oplus \Sym^{2n}_L \mf{h}^1(A)_{\ol{\tau}}.
            \end{gather*}
        \end{enumerate}
    \end{rem}

    The following lemma shows that the formation of Weil motives is compatible with products of abelian varieties equipped with $K$-action.
    \begin{lem}\label{lemWeilTensor}
        Suppose we are given another abelian variety $A'$ over $k$ with an embedding $K \hookrightarrow \End^0(A')$, such that $\dim A' = 2m$. Equip $A \times_k A'$ with the diagonal $K$-action. Then there is a canonical isomorphism
        \begin{gather*}
            \mc{W}_K(A \times_k A') \cong \mc{W}_K(A) \otimes_K \mc{W}_K(A')
        \end{gather*}
        in $\mc{M}(k)_K$.
    \end{lem}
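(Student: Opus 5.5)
We plan to reduce to a statement about symmetric algebras over $K$, using the Künneth decomposition for $\mf{h}^1$ of a product. The first step is the canonical isomorphism
\begin{gather*}
    \mf{h}^1(A \times_k A') \cong \mf{h}^1(A) \oplus \mf{h}^1(A')
\end{gather*}
in $\mc{M}(k)_\Q$, induced by pullback along the two projections. It is compatible with the Deninger--Murre decompositions, since the projections are homomorphisms and the Chow--K\"unneth projectors are characterized by the action of multiplication by integers. Because the $K$-action on $A \times_k A'$ is diagonal, this is an isomorphism of $K$-module motives. Pulling back along a homomorphism commutes with the action of $[m]$ and with the endomorphisms from $K$, so it preserves both the grading and the $K$-structure.

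Next, we use the exponential property of symmetric powers in the symmetric monoidal $\Q$-linear Karoubian category $\mc{M}(k)_K$ (with product $\otimes_K$):
\begin{gather*}
    \Sym^{N}_K(M \oplus M') \cong \bigoplus_{i + j = N} \Sym^i_K M \otimes_K \Sym^j_K M'.
\end{gather*}
This is a formal identity valid in any such category: it follows from the standard decomposition of $(M \oplus M')^{\otimes N}$ and the symmetrizing projector. Applying it with $M = \mf{h}^1(A)$, $M' = \mf{h}^1(A')$ and $N = 2n + 2m$, the summand $(i,j) = (2n, 2m)$ gives a canonical map
\begin{gather*}
    \mc{W}_K(A) \otimes_K \mc{W}_K(A') \to \mc{W}_K(A \times_k A').
\end{gather*}
This map is the inclusion of a direct summand. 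It can equivalently be described as the restriction of the multiplication map $\Sym_K \otimes_K \Sym_K \to \Sym_K$, pulled back from the factors.

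The main obstacle is showing that this summand exhausts everything, i.e.\ that $\Sym^i_K \mf{h}^1(A) = 0$ for $i > 2n$, and likewise for $A'$ with $i > 2m$. Given this vanishing, every other summand with $i + j = 2n + 2m$ has $i > 2n$ or $j > 2m$ and so vanishes. For the vanishing, we extend scalars to $L$ and use Corollary~\ref{corSymEigen}:
\begin{gather*}
    (\Sym^i_K \mf{h}^1(A))\otimes_\Q L \cong \Sym^i_L \mf{h}^1(A)_\tau \oplus \Sym^i_L \mf{h}^1(A)_{\ol\tau}.
\end{gather*}
Each $\mf{h}^1(A)_\tau$ is an oddly finite-dimensional (Kimura) motive. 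Its realizations have dimension $2n$: the two eigenspaces have equal dimension because the trace of $K$ on $H^1$ is rational, being the trace of a $\Q$-endomorphism. An oddly finite-dimensional motive of rank $d$ has vanishing $\Sym^i$ for $i > d$ by Kimura's nilpotence theorem, so the vanishing holds after extending scalars to $L$. It then holds over $K$, since $\mc{M}(k)_K \to \mc{M}(k)_L$ is faithful on objects: an object is zero if its identity is zero, and $\End \otimes_K L$ injects.

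Finally, we check that the isomorphism is canonical, i.e.\ independent of choices. This holds because every map used is either a pullback along a projection or a structural isomorphism of the symmetric monoidal structure.
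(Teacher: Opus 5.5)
Your proposal is correct and takes essentially the same approach as the paper: split $\mf{h}^1(A \times_k A') \cong \mf{h}^1(A) \oplus \mf{h}^1(A')$, kill $\Sym^r_K \mf{h}^1(A)$ for $r > 2n$ (and likewise for $A'$) using odd finite-dimensionality, and read off the $(2n,2m)$ term of the binomial formula for symmetric powers of a direct sum. The only difference is that you justify why $\mf{h}^1(A)$ has rank $2n$ over $K$ (equal $\tau$- and $\ol{\tau}$-eigenspace dimensions via rationality of traces, then passing to $L$ and back), which the paper simply asserts before citing Andr\'e's Corollaire 3.20.
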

    \begin{proof}
        Write 
        \begin{gather*}
            \mf{h}^1(A \times_k A') \cong \mf{h}^1(A) \oplus \mf{h}^1(A'),
        \end{gather*} 
        and note that $\mf{h}^1(A)$ and $\mf{h}^1(A')$ are oddly finite-dimensional \cite[Proposition 2.1]{Andre2005}, of respective ranks $2n$ and $2m$ in $\mc{M}(k)_K$. Hence we have
        \begin{align*}
            \Sym^r_K \mf{h}^1(A) &= 0,\\
            \Sym^s_K \mf{h}^1(A') &= 0
        \end{align*}
        for all $r > 2n$ and $s > 2m$ \cite[Corollaire 3.20]{Andre2005}. The binomial formula for a symmetric power of a direct sum then yields
        \begin{align*}
            \mc{W}_K(A \times_k A') &\cong \Sym^{2n+2m}_K(\mf{h}^1(A) \oplus \mf{h}^1(A')) \\
                &\cong \Sym^{2n}_K(\mf{h}^1(A)) \otimes_K \Sym^{2m}_K(\mf{h}^1(A')) \\
                &\cong \mc{W}_K(A) \otimes_K \mc{W}_K(A'). 
        \end{align*}
    \end{proof}
    \begin{rem}\label{remDecompOverDvrs}
        The constructions presented thus far in this section apply similarly to relative Chow motives of abelian schemes over a discrete valuation ring, and are compatible with base change. See \cite[Remark 4.2]{Ancona2021}.
    \end{rem}
    \par
    We now connect Weil motives to the classical notion of Weil classes. Suppose that $k \subseteq \Co$, that $K / \Q$ is imaginary quadratic, and that $L = \Co$. Let $H^1_B(A, \Co)$ denote Betti cohomology, let $H^1_B(A, \Co)_\tau$ denote the Betti realization of $\mf{h}^1(A)_{\tau}$, and similarly let $H^1_B(A, \Co)_{\ol{\tau}}$ denote the realization of $\mf{h}^1(A)_{\ol{\tau}}$. Let $a = \dim_\Co H^{1,0}_B(A, \Co)_\tau$, where
    \begin{gather*}
        H^{1,0}_B(A, \Co)_\tau = H^{1,0}_B(A, \Co) \cap H^1_B(A, \Co)_\tau.
    \end{gather*}
    By Corollary \ref{corSymEigen}, there is a decomposition
    \begin{gather*}
        \mc{W}_K(A) \otimes_\Q \Co \cong \Sym^{2n}_\Co \mf{h}^1(A)_\tau \oplus \Sym^{2n}_\Co \mf{h}^1(A)_{\ol{\tau}}
    \end{gather*}
    in $\mc{M}(k)_\Co$. The induced decomposition on Betti cohomology is
    \begin{gather}\label{eqWeilBetti}
        H^{2n}_B(\mc{W}_K(A), \Co) = \bigwedge\limits_\Co^{2n} H^1_B(A, \Co)_\tau \oplus \bigwedge\limits_\Co^{2n} H^1_B(A, \Co)_{\ol{\tau}},
    \end{gather}
    by \cite[Footnote 4]{Andre2005}. The two summands in \eqref{eqWeilBetti} have Hodge types $(a, 2n-a)$ and $(2n-a, a)$, respectively. 
    \begin{defn}[{cf. \cite{Markman2026}}]
        We say that the pair $(A, \iota: K \hookrightarrow \End^0(A))$ is an \textbf{abelian variety of Weil type} if $a = n$. In this case, the elements of $H^{2n}_B(\mc{W}_K(A), \Q)$ are called \textbf{Weil classes}. We sometimes write $(A, K)$ instead of $(A, \iota)$ if the embedding is clear from context.
    \end{defn}

    \section{CM-structures and exceptional subsets}\label{secCmStruct}
    We now specialize the constructions of Section \ref{secDecomp} to abelian varieties over finite fields. The endomorphism algebra of such a variety contains certain large commutative subalgebras, called CM-structures. The action of a CM-structure yields a very fine decomposition of the variety's motive, which lends insight into the Galois action on its $\ell$-adic realization. This section largely draws from \cite[Sections 6 and 7]{Ancona2021}.
    \par
    Let $A$ be an abelian variety of dimension $g$ over a finite field $k \cong \F_q$. The following assumption shall be in force throughout the section.

    \begin{asm}\label{asmExt}
        Replacing $k$ with a finite extension if necessary, assume the following: 
        \begin{enumerate}[(i)]
            \item that $\End^0(A) = \End^0(A_{\ol{k}})$;
            \item that for all $r \ge 0$, if $x \in H^{2r}_{\et}(A_{\ol{k}}, \Q_\ell(r))$ is a class which is fixed by a finite-index subgroup of $G_k$, then $x$ is fixed by $G_k$. 
        \end{enumerate}
    \end{asm}

    \begin{rem}
        A standard transfer argument shows that to prove the Tate conjecture for $A$, it suffices to prove the Tate conjecture for $A_{k'}$, where $k' / k$ is any finite extension. Thus in proving the conjecture for $A$, we may assume Assumption \ref{asmExt} without loss of generality. We also remark that the validity of the Tate conjecture for an abelian variety is an isogeny invariant, since an isogeny of abelian varieties induces an isomorphism between their respective Chow motives. See \cite[Lemma 5.1]{APFV2025} for more details.
        \par
        For the ease of the reader, we will try to ensure that statements in the remainder of the paper which are intended to parallel \cite{Ancona2021} closely follow Ancona's original formulations. We will thus sometimes overtly account for the possibility of needing to replace $k$ by a finite extension, even in situations where Assumption \ref{asmExt} guarantees that no such replacement is necessary.
    \end{rem}

    \begin{defn}[{\cite[Definition 6.3]{Ancona2021}}]
        When $A$ is simple, a \textbf{CM-structure} for $A$ is a choice of maximal CM field contained in $\End^0(A)$. In general, an isogeny decomposition $A \sim A_1 \times_k ... \times_k A_t$ into simple abelian varieties, and a choice of CM-structure $L_j$ for each $A_j$, determine a maximal commutative $\Q$-subalgebra $B = L_1 \times ... \times L_t \subseteq \End^0(A)$. An algebra $B$ constructed in this way is called a \textbf{CM-structure} for $A$. 
    \end{defn}
    \begin{rem}
        Any CM-structure for $A$ contains the center of $\End^0(A)$, and hence contains the Frobenius endomorphism $\Frob_A$. 
    \end{rem}
    \begin{prop}[{\cite{Tate1966}, \cite[Lemme 2]{Tate1971}}]
        There exists a CM-structure for $A$. Any CM-structure for $A$ has dimension $2g$ over $\Q$.
    \end{prop}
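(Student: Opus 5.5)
The plan is to reduce to the simple case and then use Tate's description of $\End^0$ of a simple abelian variety over a finite field.

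\emph{Reduction to the simple case.} By Poincaré reducibility, choose an isogeny decomposition $A \sim A_1 \times_k \dots \times_k A_t$ with each $A_j$ simple. By definition, a CM-structure for $A$ is $B = L_1 \times \dots \times L_t$, where $L_j$ is a CM-structure for $A_j$. Hence $\dim_\Q B = \sum_j \dim_\Q L_j$. Since $\sum_j 2\dim A_j = 2g$, both assertions reduce to the case where $A$ is simple. In that case we must show two things: a maximal CM subfield of $\End^0(A)$ exists, and every maximal CM subfield has degree $2\dim A$.

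\emph{Simple case.} Let $A$ be simple of dimension $g$. By Tate's theorem \cite{Tate1966}, $D = \End^0(A)$ is a division algebra with center $Z = \Q(\Frob_A)$, and
\begin{gather*}
    2g = [D : Z]^{1/2}\,[Z : \Q].
\end{gather*}
We would also invoke Tate's classification of the local invariants of $D$ \cite{Tate1971}: $Z$ is either $\Q$, $\Q(\sqrt{p})$, or a CM field. Moreover, $D$ has invariant $1/2$ at the real places of $Z$, so $D$ is totally definite quaternion in the two real cases.

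\emph{Existence and degree.} Suppose first that $Z$ is a CM field. Every maximal subfield $L$ of $D$ contains $Z$ and satisfies $[L:Z] = [D:Z]^{1/2}$, so $[L:\Q] = 2g$. It remains to check that some maximal subfield is CM, and that any maximal CM subfield is in fact a maximal subfield.

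The first point is the main obstacle, and I would handle it via the Grunwald--Wang / local--global theorem for splitting fields. Choose a totally real extension $F^+/Z^+$ of degree $[D:Z]^{1/2}$, and set $L = F^+ Z$, a CM field of degree $[D:Z]^{1/2}$ over $Z$. Impose local conditions at the finitely many places of $Z$ where $D$ ramifies, so that $L$ splits $D$; these are places above $p$, which are finite. Then $L$ embeds into $D$ as a maximal subfield.

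For the second point, let $L'$ be a maximal CM subfield of $D$. Then $L'$ contains $Z$, since $L'Z$ is again CM, being a compositum of CM fields inside a commutative subalgebra. The centralizer argument then shows that $L'$ is a maximal commutative subfield, so it has degree $2g$ as before.

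In the two real-center cases the argument is direct. If $Z = \Q$, then $D$ is the definite quaternion algebra ramified at $p, \infty$ and $g = 1$; a maximal CM subfield is an imaginary quadratic field, of degree $2$, and one exists because any imaginary quadratic field in which $p$ does not split embeds into $D$. If $Z = \Q(\sqrt{p})$, then $g = 2$, $D$ is totally definite quaternion over $Z$, and every maximal subfield is totally imaginary quadratic over $Z$, hence CM of degree $4$.
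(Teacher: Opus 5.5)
The paper gives no argument here and simply cites Tate. Your outline is the standard argument behind those citations: reduce to simple $A$, use $2g=[D:Z]^{1/2}[Z:\Q]$, and build a CM splitting field $L=F^{+}Z$. The existence half is essentially right. The gap is in your second point. That point is what the ``any'' half of the statement needs if a CM-structure means a CM subfield that is maximal \emph{among CM subfields}; if it instead means a CM field that is a maximal commutative subfield, the degree claim is immediate from Tate's formula.

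Showing $L'\supseteq Z$ is fine. But the double centralizer theorem only tells you that $C=C_D(L')$ is a central division algebra over $L'$ of degree $m/[L':Z]$, where $m=[D:Z]^{1/2}$. It gives you maximal subfields of $C$ containing $L'$, but nothing forces any of them to be CM. Maximality of $L'$ among CM subfields says nothing about non-CM subfields of $C$, so ``the centralizer argument'' does not by itself give $C=L'$. The missing step is to rerun your existence construction for $C$ over the CM field $L'$. If $C\neq L'$, find a totally real $M^{+}/(L')^{+}$ such that $L''=M^{+}L'$ has degree $[C:L']^{1/2}$ over $L'$ and splits $C$. Then $L''$ embeds in $C$ over $L'$, giving a CM subfield of $D$ strictly containing $L'$, which is a contradiction.

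For that rerun to be legitimate, your local conditions must be stated precisely; currently you impose them loosely ``at the places of $Z$ where $D$ ramifies''. The only freedom is in $F^{+}$, so you can only prescribe behaviour at places $v$ of $Z^{+}$. The local degree of $L$ at $w\mid v$ is under your control only when $Z_w=Z^{+}_v$, i.e.\ when $v$ splits in $Z$.

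For $D=\End^0(A)$ this holds automatically. If $w=\overline{w}$, then $\pi\overline{\pi}=q$ gives $\mathrm{ord}_w(\pi)=\tfrac12\mathrm{ord}_w(q)$, so $\mathrm{inv}_w(D)=\tfrac12[Z_w:\Q_p]=[Z^{+}_v:\Q_p]\in\Z$. So you only need $F^{+}_v$ to be a field of degree $m$ at the split places $v$ below ramified $w$. Weak approximation plus Krasner's lemma produces such an $F^{+}$; Grunwald--Wang is not needed.

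The same property passes to the centralizer. We have $\mathrm{inv}_{w'}(C)=[L'_{w'}:Z_w]\,\mathrm{inv}_w(D)$ for $w'\mid w$. A place $w'$ of $L'$ fixed by complex conjugation lies over a place $w$ of $Z$ fixed by it, because conjugation on $L'$ restricts to conjugation on $Z$. So $C$ is again unramified at conjugation-stable places. Alternatively, at non-split $v$ you can choose $F^{+}_v$ linearly disjoint from $Z_w$, which proves existence for an arbitrary central division algebra over a CM field.

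A smaller point concerns the case $Z=\Q(\sqrt p)$. Your claim that a maximal CM subfield contains $Z$ cannot use ``compositum of CM fields'', since $Z$ is totally real there. Imaginary quadratic subfields of $D$ are CM but not maximal, so you need the fact that the compositum of a CM field with a totally real field is CM.
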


    The motivic decomposition induced by the action of a CM-structure will be described in terms of some related notation defined below. 

    \begin{nota}[{\cite[Notation 6.5]{Ancona2021}}]\label{notCMStruct}
        Let 
        \begin{gather*}
            B = L_1 \times ... \times L_t \subseteq \End^0(A)
        \end{gather*}
        be a CM-structure for $A$. Let $L / \Q$ be a finite Galois CM field into which each $L_j$ embeds, and fix a place $\lambda | \ell$ of $L$. Define the following notation:
        \begin{align*}
            \Sigma_j &= \Hom(L_j, L), \\
            \Sigma &= \bigsqcup\limits_j \Sigma_j \cong \Hom(B, L), \\
            G &= \Gal(L / \Q),
        \end{align*}
        where $\Hom$ denotes ring homomorphisms. Then $|\Sigma| = 2g$, and there is a natural action of $G$ on $\Sigma$ via composition. For $n \ge 0$ and $I \in \binom{\Sigma}{n}$, we let $\sigma_I$ denote the map $\prod\limits_{\sigma \in I} \sigma: B \to L$, and let $\alpha_I = \sigma_I(\Frob_A)$ denote the Frobenius eigenvalue associated with $I$. Note that $\sigma_I$ is a map of multiplicative monoids, but in general is not a map of $\Q$-algebras. We denote the conjugate of a subset $I \subseteq \Sigma$ by $\ol{I}$. 
        \par
        We may at times replace $L$ with a finite extension $L'$ which is CM and Galois over $\Q$. After such a replacement, the same notation $\Sigma_j, \Sigma, G$, etc. will be implicitly understood to refer to objects associated with $L'$ instead of $L$, e.g. $G$ will denote $\Gal(L' / \Q)$.
    \end{nota}
    For the remainder of the section, we fix a CM-structure $B$ and associated notation as in Notation \ref{notCMStruct}.

    \begin{prop}[{\cite[Propositions 6.6 and 6.7]{Ancona2021}}]\label{propCMDecomp}
        The following are true.
        \begin{enumerate}[(i)]
            \item There is a canonical decomposition in $\mc{M}(k)_L$ 
            \begin{gather}\label{eqHnDecompOverL}
                \mf{h}^n(A)_L \cong \bigoplus\limits_{I \in \binom{\Sigma}{n}} M_I.
            \end{gather}
            Here
            \begin{gather*}
                M_I = \bigotimes\limits_{\sigma \in I} M_\sigma,\\
                M_\sigma = \mf{h}^1(A)_\sigma,
            \end{gather*}
            where $\mf{h}^1(A)_\sigma$ is as in \eqref{eqFToLDecomp}. Each summand $M_I$ is of rank one.
            \item The decomposition \eqref{eqHnDecompOverL} is preserved by the action of the underlying multiplicative monoid of $B$. Here an element $b \in B$ acts on $M_I$ as $\prod\limits_{\sigma \in I} \sigma(b)$.
            \item The decomposition \eqref{eqHnDecompOverL} induces a decomposition in $\mc{M}(k)_\Q$
            \begin{gather*}
                \mf{h}^n(A) \cong \bigoplus\limits_{\mc{O} \in \binom{\Sigma}{n} / G} N_{\mc{O}},
            \end{gather*}
            where the summands are indexed by $G$-orbits of $\binom{\Sigma}{n}$, and
            \begin{gather*}
                (N_{\mc{O}})_L \cong \bigoplus\limits_{I \in \mc{O}} M_I.
            \end{gather*}
            In particular, the rank of $N_{\mc{O}}$ is equal to the size of the orbit $\mc{O}$.
        \end{enumerate}
    \end{prop}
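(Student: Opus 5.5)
The plan is to apply the formalism of Section \ref{secDecomp} with $F = \Q$ and the finite \'etale algebra $B = L_1 \times \dots \times L_t$, which is split by $L$ because each $L_j$ embeds into the Galois field $L$. Since $B$ acts on $A$ by endomorphisms, it acts on $\mf{h}^1(A)$. Hence $\mf{h}^1(A)$ is a $B$-module motive, and \eqref{eqFToLDecomp} gives a canonical decomposition
\begin{gather*}
\mf{h}^1(A)_L \cong \bigoplus_{\sigma \in \Sigma} M_\sigma.
\end{gather*}

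Next I will check that each $M_\sigma$ has rank one. Take any classical realization, say $\ell$-adic after extending scalars along $L \to L_\lambda$. The realization of $\mf{h}^1(A)$ is $V_\ell(A)^\vee$, which is free of rank one over $B \otimes \Q_\ell$. For simple factors this is Tate's theorem, since $[L_j:\Q] = 2\dim A_j$ for a CM-structure. It follows that each $\sigma$-eigenspace is one-dimensional, and since $|\Sigma| = 2g$ these spaces account for all of the realization. By \cite[Th\'eor\`eme 4.2.5.2]{Andre2004}, the rank does not depend on the realization, so $\rk M_\sigma = 1$.

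For (i), I use \eqref{eqSymAlgH1}: $\mf{h}^n(A)_L \cong \Sym^n \mf{h}^1(A)_L = \Sym^n(\bigoplus_\sigma M_\sigma)$. Expanding the symmetric power of a direct sum by the binomial formula, as in the proof of Lemma \ref{lemWeilTensor}, gives a sum over multisets of $\Sigma$. Each $M_\sigma$ is oddly finite-dimensional of rank one (it is a summand of the oddly finite-dimensional $\mf{h}^1(A)$), so $\Sym^r M_\sigma = 0$ for $r \ge 2$ by \cite[Corollaire 3.20]{Andre2005}. Only subsets $I$ then contribute, each giving $M_I = \bigotimes_{\sigma \in I} M_\sigma$, and $M_I$ has rank one as a tensor product of rank-one motives.

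For (ii), $b$ acts on $\mf{h}^n(A)$ as $\Sym^n$ of its action on $\mf{h}^1(A)$, and on $M_\sigma$ it acts by the scalar $\sigma(b)$. Functoriality of the binomial decomposition then shows that $b$ preserves each $M_I$ and acts there by $\prod_{\sigma \in I}\sigma(b)$.

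For (iii), $G$ acts semilinearly on $\mf{h}^1(A)_L$ (viewed in $\mc{M}(k)_\Q$), sending $M_\sigma$ to $M_{g\circ\sigma}$ as explained after \eqref{eqFToLDecomp}. The symmetric-power decomposition is functorial, so $g$ sends $M_I$ to $M_{gI}$. For an orbit $\mc{O}$, the projector $\sum_{I \in \mc{O}} e_I$ onto $\bigoplus_{I \in \mc{O}} M_I$ is therefore $G$-invariant. By Galois descent for the coefficients, $\mathrm{End}(\mf{h}^n(A)_L)^G = \mathrm{End}(\mf{h}^n(A))$, so this projector comes from an idempotent over $\Q$ and cuts out $N_{\mc{O}}$ with $(N_{\mc{O}})_L \cong \bigoplus_{I\in\mc{O}} M_I$. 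The rank of $N_{\mc{O}}$ is then $|\mc{O}|$, because rank is unchanged under scalar extension.

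The main obstacle is making the Galois-equivariance of the binomial/symmetric-power decomposition precise, together with the descent of invariant projectors from $L$ to $\Q$. The vanishing of $\Sym^{\ge 2}$ on a rank-one odd motive is the other essential input.
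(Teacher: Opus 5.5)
Your proposal is correct and follows essentially the argument the paper relies on. The paper gives no proof of its own, citing \cite[Propositions 6.6 and 6.7]{Ancona2021}, and it uses the same ingredients in Lemma \ref{lemWeilTensor} and Proposition \ref{propTwoOrbit}: split $\mf{h}^1(A)_L$ by the $B$-action, get $\rk M_\sigma = 1$ from freeness of $V_\ell(A)$ over $B \otimes_\Q \Q_\ell$, expand $\Sym^n$ binomially using $\Sym^{\ge 2} M_\sigma = 0$ for odd rank-one motives, and descend the $G$-invariant orbit projectors to $\Q$.

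One small attribution point: freeness of $V_\ell(A_j)$ over $L_j \otimes_\Q \Q_\ell$ is the general fact for any number field in $\End^0(A_j)$. Tate's theorem only supplies $[L_j:\Q] = 2\dim A_j$, which is what makes that free module of rank one.
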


    \begin{nota}
        For $I \in \binom{\Sigma}{n}$ and $M_I \in \mc{M}(k)_L$ a motive as in Proposition \ref{propCMDecomp}, we let $V_I = H^n_{\et}(M_I, L_\lambda)$ denote the $\lambda$-adic realization of $M_I$.
    \end{nota}
    
    We next define Lefschetz and Tate classes, and describe them in terms of the eigenvalues of Frobenius acting on the components of the decomposition of $\mf{h}(A)_L$.

    \begin{defn}\label{defTateAndLefClasses}
        For $r \ge 0$, let 
        \begin{gather*}
            \mc{D}^r(A) \subseteq \CH^r(A)_\Q
        \end{gather*}
        denote the $\Q$-subspace spanned by products of divisors. Elements of $\mc{D}^r(A)$ are called \textbf{Lefschetz classes}. We let
        \begin{gather*}
            \mc{D}^r_\ell(A) \subseteq H^{2r}_{\et}(A_{\ol{k}}, \Q_\ell(r))
        \end{gather*}
        denote the $\Q_\ell$-linear span of $\mc{D}^r(A)$ under the cycle class map. We also refer to elements of $\mc{D}^r_\ell(A)$ as Lefschetz classes when no confusion may arise. Let 
        \begin{gather*}
            \mc{T}^r_\ell(A) = H^{2r}_{\et}(A_{\ol{k}}, \Q_\ell(r))^{G_k}
        \end{gather*}
        denote the space of \textbf{Tate classes}. We let $\mc{D}^r_\lambda(A) = \mc{D}^r_\ell(A) \otimes_{\Q_\ell} L_\lambda$, and similarly define $\mc{T}^r_\lambda(A) = \mc{T}^r_\ell(A) \otimes_{\Q_\ell} L_\lambda$.
    \end{defn}
    \begin{prop}\label{propTateLefDecomp}
        For $r \ge 0$, Proposition \ref{propCMDecomp} induces canonical decompositions
        \begin{gather}
            \mc{T}^r_\lambda(A) = \bigoplus\limits_{\substack{I \in \binom{\Sigma}{2r} \\ \alpha_I = q^r}} V_I(r), \label{eqTrDecomp}\\
            \mc{D}^1_{\lambda}(A) =  \bigoplus\limits_{\substack{I \in \binom{\Sigma}{2} \\ \alpha_I = q}} V_I(1), \label{eqLef1Decomp}\\
            \mc{D}^2_{\lambda}(A) =  \bigoplus\limits_{I \in S} V_I(2), \label{eqLef2Decomp} 
        \end{gather}
        where 
        \begin{gather}
            S = \{I \in \binom{\Sigma}{4} : \exists \ P, Q \in \binom{\Sigma}{2} \text{ with } I = P \sqcup Q, \alpha_P = \alpha_Q = q\} \label{eqNonExoticSubsets}
        \end{gather}
    \end{prop}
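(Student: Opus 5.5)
The plan is to realize Proposition \ref{propCMDecomp} in $\lambda$-adic cohomology and read off the Frobenius action on each summand.

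\textbf{Step 1: the realized decomposition.} Applying the $\lambda$-adic realization to \eqref{eqHnDecompOverL} and Tate-twisting gives
\begin{gather*}
H^{2r}_{\et}(A_{\ol{k}}, L_\lambda(r)) = \bigoplus_{I \in \binom{\Sigma}{2r}} V_I(r),
\end{gather*}
where each $V_I$ is one-dimensional over $L_\lambda$. The Frobenius endomorphism $\Frob_A$ lies in the CM-structure $B$. By Proposition \ref{propCMDecomp}(ii) it acts on $M_I$, hence on $V_I$, by the scalar $\alpha_I = \sigma_I(\Frob_A)$.

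\textbf{Step 2: identifying the Galois action.} The geometric Frobenius in $G_k$ acts on $\ell$-adic cohomology through $\Frob_A^*$. After the twist by $r$, it therefore acts on $V_I(r)$ by $\alpha_I q^{-r}$. Since $G_k$ is topologically generated by Frobenius, a class is $G_k$-invariant exactly when Frobenius fixes it. Because the decomposition is Frobenius-stable and the action on each line is scalar, the invariants are the sum of those $V_I(r)$ with $\alpha_I = q^r$. Finally, $\mc{T}^r_\ell(A)\otimes_{\Q_\ell} L_\lambda$ equals the $G_k$-invariants of the scalar extension, since taking invariants commutes with flat base change. This proves \eqref{eqTrDecomp}.

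\textbf{Step 3: divisors.} For $r=1$, Tate's theorem \cite{Tate1966} gives $\mc{D}^1_\ell(A) = \mc{T}^1_\ell(A)$. Then \eqref{eqLef1Decomp} is the case $r=1$ of \eqref{eqTrDecomp}.

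\textbf{Step 4: codimension two.} By definition, $\mc{D}^2_\lambda(A)$ is the $L_\lambda$-span of cup products $x\cup y$ with $x,y\in \mc{D}^1_\lambda(A)$. By Step 3 it is therefore spanned by the products $V_P(1)\cup V_Q(1)$ with $\alpha_P=\alpha_Q=q$. Under the isomorphism $R(\mf{h}^\bullet(A)) \cong \bigwedge^\bullet R(\mf{h}^1(A))$ from \eqref{eqSymAlgH1}, we have $V_I = \bigwedge_{\sigma\in I} H^1(M_\sigma)$, and each $H^1(M_\sigma)$ is a line. Hence the cup product satisfies:
\begin{enumerate}[(a)]
\item $V_P \cup V_Q = 0$ when $P\cap Q\neq\emptyset$;
\item $V_P \cup V_Q = V_{P\sqcup Q}$ when $P\cap Q = \emptyset$, where both sides are lines and the wedge of nonzero generators is nonzero.
\end{enumerate}
So the span is exactly $\bigoplus_{I\in S} V_I(2)$, which is \eqref{eqLef2Decomp}.

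\textbf{Main obstacle.} The delicate point is compatibility between the motivic decomposition of $\mf{h}^n(A)_L$ as a tensor product of the $M_\sigma$ and the exterior-algebra description of cohomology, including the identification of cup product with wedge product. This is what justifies (a) and (b) in Step 4. I would handle it with Corollary \ref{corSymEigen}, together with the footnote of \cite{Andre2005} already cited in the paper, which gives that the realization of $\Sym$ of $\mf{h}^1$ is $\bigwedge$ of $H^1$ compatibly with the algebra structure.
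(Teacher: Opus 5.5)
Your proposal is correct and follows the paper's proof. Frobenius acts on each line $V_I(r)$ by $\alpha_I q^{-r}$, which gives the Tate-class decomposition. Tate's theorem for divisors then gives the case $r=1$. Finally, the cup product of $V_P$ and $V_Q$ vanishes if $P\cap Q\neq\emptyset$ and spans $V_{P\sqcup Q}$ otherwise. You add detail the paper leaves implicit: that invariants commute with scalar extension to $L_\lambda$, and the exterior-algebra justification of the cup-product claim.
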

    \begin{proof}
        For $I \in \binom{\Sigma}{2r}$, Proposition \ref{propCMDecomp} implies that $G_k$ acts trivially on $V_I(r)$ iff $\alpha_I = q^r$. This, together with the Tate conjecture for divisors on $A$ \cite{Tate1966}, yields \eqref{eqTrDecomp} and \eqref{eqLef1Decomp}. 
        \par
        Fix $P, Q \in \binom{\Sigma}{2}$ with $\alpha_P = \alpha_Q = q$. The cup product of $V_P$ and $V_Q$ is zero if $P \cap Q \neq \emptyset$, and spans $V_{P \sqcup Q}$ otherwise. This and \eqref{eqLef1Decomp} together yield \eqref{eqLef2Decomp}.
    \end{proof}
    \begin{defn}\label{defExoticSubset}
        We say that a four-element subset $I \in \binom{\Sigma}{4}$ is \textbf{Tate} if $\alpha_I = q^2$. We say that a Tate subset $I$ is \textbf{exceptional} if $I$ is not in the set $S$ defined in \eqref{eqNonExoticSubsets}. The property of a subset being exceptional is invariant under the $G$-action on $\binom{\Sigma}{4}$, and we say that a $G$-orbit $\mc{O} \in \binom{\Sigma}{4} / G$ is \textbf{exceptional} if $\mc{O} = G \cdot I$, where $I$ is exceptional. If $\mc{O}$ is an exceptional $G$-orbit, then we also say that the motive $N_{\mc{O}}$ of Proposition \ref{propCMDecomp} corresponding to $\mc{O}$ is \textbf{exceptional}. For an algebraic extension $T / \Q_\ell$, we say that a nonzero $x \in H^4_{\et}(A_{\ol{k}}, T(2))$ is \textbf{exceptional} if $x$ is in the span of all subspaces of the form $H^4_{\et}(N_{\mc{O}}, T(2))$ for some exceptional motive $N_{\mc{O}}$.
    \end{defn}
    \begin{rem}
        Proposition \ref{propTateLefDecomp} implies that $\mc{T}^2_\ell(A)$ is spanned by exceptional classes and products of divisor classes. Thus, in order to prove the Tate conjecture for an abelian fourfold $A$ over $k$, it suffices to show that the exceptional classes in $H^4_{\et}(A_{\ol{k}}, \Q_\ell(2))$ are algebraic.
    \end{rem}
    \begin{rem}\label{remExoticMotives}
        Our definitions of exceptional motives and classes are intended to parallel the notions of \enquote{exotic} motives and classes in \cite[Definition 7.1]{Ancona2021}. We say that $N_{\mc{O}}$ is exceptional when $H^4_{\et}(N_{\mc{O}}, \Q_\ell(2))$ is spanned by non-Lefschetz Tate classes. By contrast, Ancona says that $N_{\mc{O}}$ is exotic when $\Hom_{\mc{M}_{\num}(k)_\Q}(\mathbf{1}, N_{\mc{O}}(2))$ is nonzero and does not contain a nonzero Lefschetz class. Here $\mc{M}_{\num}(k)_\Q$ denotes the category of motives modulo numerical equivalence. Since assuming the existence of algebraic classes in a proof of the Tate conjecture would be circular, we have to work with Tate classes in some situations where Ancona works with algebraic classes.
        \par
        However, note that our definition of exceptional subset is equivalent to Ancona's notion of exotic subset \cite[Definition 7.8]{Ancona2021}. In the next section, we will exploit the fact that many of Ancona's arguments only depend on properties of Frobenius eigenvalues of exceptional subsets, and thus translate to our setting, without any a priori unsupported assumptions on the existence of algebraic cycles.
    \end{rem}

    \section{Reduction to Weil motives}\label{secRedWeilMotive}

    Let $A$ be an abelian fourfold over a finite field $k \cong \F_q$. We again require that Assumption \ref{asmExt} holds. The main result of this section is Corollary \ref{corExoticSubWeil}, which implies that to prove the Tate conjecture for $A$, it suffices to prove the algebraicity of Tate classes supported on certain Weil motives associated with $A$. This corollary is established in two steps: first, we show that every exceptional motive of rank two is isomorphic to a Weil motive; then we show that by changing the CM-structure, one can arrange that every exceptional motive has rank two. We again draw on several arguments from \cite[Section 7]{Ancona2021}, with some necessary modifications as indicated in Remark \ref{remExoticMotives}. 
    \par
    Throughout, we fix a CM-structure $B$ for $A$, and associated notation as in Notation \ref{notCMStruct}.
    \begin{lem}\label{lemExoticComb}
        The following are true.
        \begin{enumerate}[(i)]
            \item If $I \in \binom{\Sigma}{4}$ is an exceptional subset, then $\{I, \ol{I}\}$ forms a partition of $\Sigma$. 
            \item There are either zero, two, or four exceptional subsets.
            \item Every $G$-orbit of exceptional subsets contains either two or four elements.
        \end{enumerate}
    \end{lem}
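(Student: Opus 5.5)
The plan is to exploit the conjugation symmetry of Frobenius eigenvalues. For $\sigma \in \Sigma$ put $\alpha_\sigma = \sigma(\Frob_A)$. Since $B$ is a product of CM fields and $\Frob_A$ is a $q$-Weil number, we have $\alpha_{\ol{\sigma}} = \ol{\alpha_\sigma}$ and $\alpha_\sigma \alpha_{\ol{\sigma}} = q$. Complex conjugation $c$ is central in $G$ because $L$ is CM and Galois. Hence $\alpha_{\ol{I}} = \ol{\alpha_I}$, and both the Tate condition and the set $S$ of \eqref{eqNonExoticSubsets} are stable under $I \mapsto \ol{I}$. So $\ol{I}$ is exceptional whenever $I$ is.

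For (i), suppose $I$ is exceptional and some $\sigma \in I$ has $\ol{\sigma} \in I$. Take $P = \{\sigma, \ol{\sigma}\}$, so $\alpha_P = q$, and $Q = I \setminus P$, so $\alpha_Q = q^2/q = q$. Then $I = P \sqcup Q \in S$, a contradiction. Thus $I \cap \ol{I} = \emptyset$, and since $|\Sigma| = 2g = 8$, the pair $\{I, \ol{I}\}$ partitions $\Sigma$.

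For (iii), granting (ii): for exceptional $I$ we have $c \cdot I = \ol{I} \neq I$, and $c$ is central of order two. So $c$ acts freely on the orbit $G \cdot I$, which therefore has even cardinality. Every element of the orbit is exceptional, so by (ii) the orbit has size $2$ or $4$. Evenness in (ii) also follows from the free involution $I \mapsto \ol{I}$ on the set of exceptional subsets.

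The main obstacle is the upper bound of four in (ii). By (i), any exceptional $I$ contains exactly one element from each of the four conjugate pairs of $\Sigma$. Fix an exceptional $I = \{\sigma_1, \dots, \sigma_4\}$ and normalize $y_i = \alpha_{\sigma_i}/\sqrt{q}$. Then $|y_i| = 1$ and $\prod y_i = 1$. Exceptionality of $I$ means $y_i y_j \neq 1$ for all $i \neq j$; the complementary pair is then automatically handled, since its product is $(y_iy_j)^{-1}$. Any other exceptional $J$ is obtained by flipping the elements indexed by a set $T \subseteq \{1,2,3,4\}$, which replaces $y_i$ by $y_i^{-1}$ for $i \in T$. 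Replacing $J$ by $\ol{J}$, we may assume $|T| \in \{1,2\}$.
\begin{enumerate}[(a)]
\item The Tate condition for $J$ is $\prod_{i\in T} y_i^2 = 1$.
\item If $|T| = 1$, this forces $y_i = \pm 1$.
\item If $|T| = 2$, it forces $y_iy_j = -1$, since $y_iy_j = 1$ is excluded, and then also $y_ky_l = -1$ for the complementary indices.
\end{enumerate}
I would then do a finite case analysis over the possible classes of $T$ modulo complement, namely $\{i\}$ or $\{i,j\}$. The aim is to show that two different classes $T_1, T_2$ cannot both yield exceptional subsets, because together their constraints produce some pair with product $1$ inside $I$ or inside one of the $J$'s. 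This would leave at most the pairs $\{I,\ol{I}\}$ and $\{J,\ol{J}\}$, so at most four exceptional subsets.

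I expect the delicate subcases to be those where some $y_i$ is real, i.e. $\alpha_{\sigma_i} = \pm\sqrt{q}$. Pure eigenvalue bookkeeping may not exclude them. If it does not, I would use that $\sigma_i$ and $\ol{\sigma_i}$ then lie in the same factor $L_j$ with real Frobenius, a supersingular factor with quaternionic endomorphisms, and exploit the structure of that factor. Alternatively, I would follow the corresponding combinatorial lemma in \cite[Section 7]{Ancona2021}. As noted in Remark \ref{remExoticMotives}, that lemma depends only on the Frobenius eigenvalues of exotic subsets and so transfers verbatim.
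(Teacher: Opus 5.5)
Your parts (i) and (iii) are correct and follow the paper's route: the paper cites \cite[Lemma 7.10]{Ancona2021} for (i), and obtains (iii) by combining (i) and (ii). The gap is the upper bound in (ii). Your planned case analysis cannot succeed, because the bound is false if you use only eigenvalue bookkeeping. Take $y=(1,-1,\beta,-\beta^{-1})$ with $\beta$ not a root of unity. This is realized over $\F_q$, $q$ a square, by $E\times E^{\mathrm{tw}}\times E'\times E'^{\mathrm{tw}}$, where $E$ is supersingular with Frobenius $\sqrt q$, $E'$ is ordinary, and ${}^{\mathrm{tw}}$ denotes the quadratic twist. Then $I$ is exceptional, and so are all eight transversals with sign pattern $(\epsilon_1,\epsilon_2,\epsilon,\epsilon)$. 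The flip classes $T=\{1\}$, $\{2\}$ and $\{1,2\}$ coexist, and the only forced relations are $y_1y_2=-1=y_3y_4$, so no pair with product $1$ appears.

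Both real-Frobenius factors in this example are supersingular curves with quaternionic endomorphisms. So your fallback of studying the structure of one such factor does not help. The obstruction is the interaction between factors that become isogenous only over an extension of $k$. For the same reason, Ancona's lemma does not transfer purely from eigenvalue data of exotic subsets. The paper uses it with his Assumption 7.5 replaced by Assumption \ref{asmExt}, and your proposal never invokes that assumption.

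The missing step is short once you use Assumption \ref{asmExt}(ii) with $r=1$. For $P\in\binom{\Sigma}{2}$, Frobenius acts on $V_P(1)$ by $\alpha_P/q$. If this is a root of unity, the assumption forces it to equal $1$.

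\begin{itemize}
\item In your case (c), $y_iy_j=-1$ means $\alpha_{\{\sigma_i,\sigma_j\}}=-q$, which is impossible. So flips with $|T|=2$ never produce Tate subsets.
\item In case (b), suppose two distinct indices $i\neq j$ have $y_i,y_j\in\{\pm1\}$. Then $y_iy_j=1$ contradicts exceptionality of $I$, and $y_iy_j=-1$ contradicts the assumption.
\item Hence at most one singleton flip $T=\{i\}$ occurs. When $y_i=\pm1$, the flipped subset $J$ has the same Frobenius eigenvalues as $I$, since $\alpha_{\ol\sigma_i}=\ol{\alpha_{\sigma_i}}=\alpha_{\sigma_i}$. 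So $J$ is automatically exceptional.
\end{itemize}

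The exceptional subsets are therefore $I$ and $\ol I$, together with $J$ and $\ol J$ when some $y_i=\pm1$. This gives (ii), and your free-involution argument then yields (iii).
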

    \begin{proof}
        Claim (i) follows from the argument of \cite[Lemma 7.10]{Ancona2021}. Likewise, claim (ii) follows from the argument of \cite[Lemma 7.13]{Ancona2021}, by substituting references to exotic classes/motives with references to exceptional classes/motives (and substituting \cite[Assumption 7.5]{Ancona2021} with our Assumption \ref{asmExt}). Claim (iii) follows from combining (i) and (ii).
    \end{proof}

    \begin{prop}\label{propTwoOrbit}
        Suppose that $\mc{O} = \{I, \ol{I}\}$ is a $G$-orbit of exceptional subsets, with associated motive $N_{\mc{O}} \in \mc{M}(k)_\Q$. There is then an imaginary quadratic field $K$ and a ring homomorphism
        \begin{gather*}
            K \hookrightarrow B \subseteq \End^0(A)
        \end{gather*}
        such that $N_{\mc{O}}$ is isomorphic to the associated Weil motive $\mc{W}_K(A)$.
    \end{prop}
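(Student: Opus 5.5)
The plan is to read off $K$ from the Galois stabilizer of the orbit, and then compare both motives through their eigen-decompositions over $L$. Since $\mc{O} = \{I, \ol{I}\}$ is a $G$-orbit of size two, the stabilizer $H = \mathrm{Stab}_G(I)$ has index two in $G$. Complex conjugation $c$ (central in $G$, since $L$ is CM) lies outside $H$. Let $K = L^H$, a quadratic field. Because $c \notin H$, $c$ acts nontrivially on $K$, so $K$ is imaginary quadratic. We then have to produce a ring homomorphism $K \hookrightarrow B$ such that, for each $\sigma \in \Sigma$, the restriction of $\sigma$ to $K$ is $\tau$ if $\sigma \in I$ and $\ol{\tau}$ if $\sigma \in \ol{I}$, where $\tau, \ol{\tau}: K \hookrightarrow L$ are the two embeddings. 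Here we use Lemma \ref{lemExoticComb}(i): $I \sqcup \ol{I} = \Sigma$.

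To construct the homomorphism, write $B = \prod_j L_j$. For each $j$ we want a map $K \to L_j$, meaning a $K$-structure on the field $L_j$ such that all $\sigma \in \Sigma_j \cap I$ restrict to $\tau$ and all $\sigma \in \Sigma_j \cap \ol{I}$ restrict to $\ol{\tau}$. Since $H$ stabilizes $I$, it acts on $\Sigma_j \cap I$. The key point is that $\Sigma_j \cap I$ is a union of $H$-orbits on $\Sigma_j = G/\Gal(L/L_j)$, and these have the form $\Sigma_j \cap I = \{\sigma : \sigma(L_j) \ldots\}$.

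Concretely, I would argue as follows. Fix $\sigma_0 \in \Sigma_j \cap I$. Then $\Sigma_j \cap I \supseteq H\sigma_0$. Moreover, $G = H \sqcup cH$, and $c$ swaps $I$ and $\ol{I}$, so $\Sigma_j \cap I = H\sigma_0$ exactly when $\Sigma_j$ is a single $G$-orbit. Hence $\Sigma_j \cap I = H\sigma_0$ and $\Sigma_j \cap \ol{I} = cH\sigma_0$. Now $\sigma_0^{-1}(K) \subseteq$ ? More precisely, we need $\tau(K) = K \subseteq \sigma_0(L_j)$. Suppose $h \in \Gal(L/\sigma_0(L_j))$. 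Then $h\sigma_0 = \sigma_0$ lies in $I$, but that alone does not force $h \in H$. Instead, consider $g \in \Gal(L/\sigma_0 L_j)$. If $g \notin H$, then $g = ch'$ with $h' \in H$, so $g\sigma_0 = c h'\sigma_0 \in \ol{I}$; but $g\sigma_0 = \sigma_0 \in I$, a contradiction. Hence $\Gal(L/\sigma_0 L_j) \subseteq H$, so $K \subseteq \sigma_0(L_j)$. Define $K \hookrightarrow L_j$ by $\sigma_0^{-1}$. Then every $h\sigma_0 \in H\sigma_0$ restricts to $h|_K = \mathrm{id}$, which is $\tau$ (viewing $K \subseteq L$), and every $ch\sigma_0$ restricts to $\ol{\tau}$. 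Assembling over $j$ gives $\iota: K \hookrightarrow B$, and the restriction pattern holds.

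With $\iota$ in hand, we compare the two motives. By Remark \ref{remWeilForgetToQ}(iii) and Corollary \ref{corSymEigen}, $\mc{W}_K(A)_L \cong \Sym^4_L \mf{h}^1(A)_\tau \oplus \Sym^4_L \mf{h}^1(A)_{\ol{\tau}}$. Refining the $K$-decomposition by the $B$-decomposition of Proposition \ref{propCMDecomp}, we get $\mf{h}^1(A)_\tau \cong \bigoplus_{\sigma \in I} M_\sigma$. Each $M_\sigma$ is oddly finite-dimensional of rank one, so $\Sym^4$ of a sum of four such pieces is $\bigotimes_{\sigma\in I} M_\sigma = M_I$, and similarly for $\ol{I}$. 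The main obstacle is upgrading this isomorphism over $L$ to one in $\mc{M}(k)_\Q$. I would handle it by observing that both $N_{\mc{O}}$ and $\mc{W}_K(A)$ are summands of $\mf{h}^4(A)$ cut out by $\Q$-rational idempotents (Remark \ref{remWeilForgetToQ}(ii) and Proposition \ref{propCMDecomp}(iii)). Both projectors become, after $\otimes L$, the projector onto $M_I \oplus M_{\ol{I}}$ in the canonical decomposition \eqref{eqHnDecompOverL}, since this identification is compatible with that decomposition. The projectors therefore agree over $L$, hence agree over $\Q$, so $N_{\mc{O}} = \mc{W}_K(A)$ as submotives.
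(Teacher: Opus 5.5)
Your proposal is correct and follows essentially the same route as the paper. You take $K = L^H$, show that the stabilizer of any $\sigma_0 \in I \cap \Sigma_j$ lies in $H$ (so $K \subseteq \sigma_0(L_j)$ and $I \cap \Sigma_j$ is exactly the set of embeddings restricting to $\tau$), assemble the resulting maps $K \hookrightarrow L_j$ into $K \hookrightarrow B$, and compute $\Sym^4_L \mf{h}^1(A)_\tau \cong M_I$; the false starts in your second paragraph can be deleted, since the argument you finally give there is sound. The only difference is in the last step: you check that the two $\Q$-rational idempotents on $\mf{h}^4(A)$ cutting out $N_{\mc{O}}$ and $\mc{W}_K(A)$ agree after extending scalars to $L$, whereas the paper invokes Galois descent for a $G$-equivariant isomorphism; your version is valid and even gives equality as submotives.
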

    \begin{proof}
        The first part of the proof is similar to the argument of \cite[Proposition 6.12]{Agugliaro2026}. Let $H \subset G$ be the stabilizer of $I$. Then $[G:H] = 2$, and $K = L^H$ is quadratic over $\Q$. Since complex conjugation sends $I$ to $\ol{I} \neq I$, conjugation acts nontrivially on $K$, so $K$ is imaginary. We let $\tau: K \hookrightarrow L$ denote the inclusion. 
        \par
        The idea of the remainder of the proof is to show that $K$ embeds into every component CM field of $B$, so that $K$ embeds into $B$. By choosing a suitable embedding $K \hookrightarrow B$, we will arrange that every $\sigma \in I$ restricts to the inclusion $\tau: K \hookrightarrow L$. Then, by examining the decomposition of $\mf{h}^1(A)_L$ into eigenspaces for the action of $K$, we will find that $N_{\mc{O}} \cong \mc{W}_K(A)$.
        \par
        Fix a component CM field $L_j$ of $B$, as in Notation \ref{notCMStruct}. Fix also an embedding $\iota_j: L_j \hookrightarrow L$, and let $G_j = \Gal(L / \iota_j(L_j))$. The map $\iota_j$ induces a bijection from $\Sigma_j = \Hom(L_j, L)$ to $G / G_j$. Let $I_j = I \cap \Sigma_j$. Then $I_j$ contains exactly one member of each conjugate pair of maps in $\Sigma_j$, by Lemma \ref{lemExoticComb}. In particular, $I_j$ is a non-empty proper subset of $\Sigma_j$. Moreover, $I_j$ is $H$-stable, since both $I$ and $\Sigma_j$ are $H$-stable. If $G_j$ is not contained in $H$, then $H G_j = G$ and $H$ acts transitively on $G / G_j \cong \Sigma_j$, which contradicts the properties of $I_j$ observed above. Hence $G_j \subseteq H$, and 
        \begin{gather*}
            K = L^H \subseteq L^{G_j} = \iota_j(L_j).
        \end{gather*}
        \par
        The map $\iota_j$ is an isomorphism onto its image, and hence induces an embedding $\eta_j: K \hookrightarrow L_j$. We may then restrict elements of $\Sigma_j$ to $K$. Using the identifications $\Sigma_j \cong G / G_j$ and $\Hom(K,L) \cong G/H$, the restriction map $\Sigma_j \to \Hom(K, L)$ is identified with the quotient map $\rho: G / G_j \to G/H$. Since $I_j$ is $H$-stable and
        \begin{gather*}
            |I_j| = \frac{1}{2}|\Sigma_j| = [H:G_j],
        \end{gather*}
        the set $I_j$ is identified with one of the fibers of $\rho$. After possibly replacing $\eta_j$ with its conjugate, we find that $I_j$ is precisely the subset of $\sigma \in \Sigma_j$ such that $\sigma \circ \eta_j = \tau$.
        \par
        The $\eta_j$ assemble into a ring homomorphism $K \hookrightarrow B = \prod\limits_j L_j$. This induces a $K$-action on $\mf{h}^1(A)$, which in turn yields a decomposition
        \begin{gather*}
            \mf{h}^1(A)_L \cong \mf{h}^1(A)_\tau \oplus \mf{h}^1(A)_{\ol{\tau}}
        \end{gather*}
        in $\mc{M}(k)_L$, as in \eqref{eqFToLDecomp}. By the description of $I_j$ in the previous paragraph, we have 
        \begin{gather*}
            \mf{h}^1(A)_\tau \cong \bigoplus\limits_{\sigma \in I} M_\sigma, \\
            \mf{h}^1(A)_{\ol{\tau}} \cong \bigoplus\limits_{\sigma \in \ol{I}} M_\sigma,
        \end{gather*}
        where $M_\sigma$ is as in Proposition \ref{propCMDecomp}. Note that since each $M_\sigma$ is an oddly finite-dimensional object of rank one in $\mc{M}(k)_L$, we have $\Sym^n_L M_\sigma \cong 0$ for $n \ge 2$. By Remark \ref{remWeilForgetToQ}, the Weil motive $\mc{W}_K(A)$ satisfies
        \begin{align*}
            \mc{W}_K(A) \otimes_\Q L &\cong \Sym^4_L \mf{h}^1(A)_\tau \oplus \Sym^4_L \mf{h}^1(A)_{\ol{\tau}} \\
            &\cong M_I \oplus M_{\ol{I}} \\
            &\cong (N_{\mc{O}})_L
        \end{align*}
        in $\mc{M}(k)_L$. If we regard the above isomorphisms as maps in $\mc{M}(k)_\Q$, then they are $G$-equivariant. By Galois descent, this implies that $\mc{W}_K(A) \cong N_{\mc{O}}$ in $\mc{M}(k)_\Q$.
    \end{proof}

    \begin{prop}\label{propFourOrbitToTwoOrbit}
        After replacing $k$ with a finite extension, there is another CM-structure $B'$ for $A$ such that every exceptional motive $N_{\mc{O}}$ with respect to $B'$ has rank two. Equivalently, every exceptional orbit with respect to $B'$ has size two.
    \end{prop}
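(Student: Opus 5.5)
The plan is to reduce to a single combinatorial case and then replace $B$ by a CM-structure that contains a suitable imaginary quadratic field. The key observation is that the number of exceptional subsets does not depend on the choice of CM-structure. By Proposition \ref{propTateLefDecomp}, the exceptional subsets index a basis of a complement of $\mc{D}^2_\lambda(A)$ in $\mc{T}^2_\lambda(A)$. So their number equals $\dim \mc{T}^2_\ell(A) - \dim \mc{D}^2_\ell(A)$, which is intrinsic to $A$. By Lemma \ref{lemExoticComb}, if some exceptional orbit for $B$ has size four, then there are exactly four exceptional subsets $I, \ol{I}, J, \ol{J}$, and they form a single $G$-orbit. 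In every other case all exceptional orbits already have size two and we take $B' = B$. So assume we are in the four-orbit case. We will find $B'$ for which the four exceptional subsets split into two orbits $\{I', \ol{I'}\}$ and $\{J', \ol{J'}\}$.

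First I would extract the structure of $A$ from the eigenvalue combinatorics, following the analysis behind \cite[Lemma 7.13]{Ancona2021} (this uses only Frobenius eigenvalues, as in Remark \ref{remExoticMotives}). Let $H$ be the stabilizer of $I$; it has index four in $G$. Lemma \ref{lemExoticComb}(i) gives $\Sigma = I \sqcup \ol{I} = J \sqcup \ol{J}$. Two consequences follow:
\begin{enumerate}[(a)]
\item the symmetric difference of $I$ and $J$ is a union of conjugate pairs;
\item $\alpha_I = \alpha_J = q^2$ forces multiplicative relations among the $\sigma(\Frob_A)$.
\end{enumerate}
I expect these to force the following. The field $L^H$ is a biquadratic-type extension containing two distinct imaginary quadratic fields $K_1, K_2$, or else some simple factor of $A$ occurs with multiplicity. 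In either case there are coincidences among the Frobenius eigenvalues $\sigma(\Frob_A)$. By Tate's theorem \cite{Tate1966}, together with Assumption \ref{asmExt}(i) (after possibly enlarging $k$), such coincidences make $\End^0(A)$ noncommutative: either some isogeny factor appears with multiplicity, giving a matrix algebra over its center, or a factor has a division algebra as endomorphism algebra. That noncommutativity is the freedom we will use to change $B$.

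Second, I would use this freedom to choose $B'$. Take an imaginary quadratic field $K$, one of the $K_i$ above, and embed it in $\End^0(A)$ commuting with $\Frob_A$ so that every simple factor's endomorphism algebra receives $K$. Then take $B'$ to be a CM-structure containing the image of $K$; it exists by extending the commutative algebra $\Q(\Frob_A)\otimes K$, or its image, to a maximal CM subalgebra factorwise. Enlarge $L$ to a Galois CM field containing the compositum. The aim is for an exceptional subset $I'$ for $B'$ to be exactly the set of $\sigma \in \Sigma$ restricting to a fixed embedding $\tau$ of $K$. Such a set is stable under $\Gal(L/\tau(K))$, which has index two, so its orbit is $\{I', \ol{I'}\}$. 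Concretely, I would check that the subset $\{\sigma : \sigma|_K = \tau\}$ has $\alpha = q^2$, which reduces to computing the norm from the $K$-linear structure, using the Frobenius relations from the first step. I would also check that this subset is not in $S$. Since the count of exceptional subsets is fixed at four, the remaining exceptional subsets then come from the second field (or the conjugate embedding), giving two orbits of size two.

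The main obstacle is the first step: the case analysis showing that the four-orbit configuration forces both the two quadratic subfields and enough noncommutativity in $\End^0(A)$ to embed $K$ compatibly into every factor. I would approach it by enumerating the possible isogeny types of $A$ (simple; $A_1 \times A_2$; powers) and listing, for each, the Galois-orbit patterns on $\Sigma$ compatible with Lemma \ref{lemExoticComb}, as in \cite[Section 7]{Ancona2021}. At each step I would keep track of whether a finite extension of $k$ is needed so that the relevant endomorphisms are defined; this is the reason the statement allows replacing $k$.
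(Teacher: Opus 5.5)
Your reduction to a single orbit of four exceptional subsets is sound: their number is $\dim\mc{T}^2_\ell(A)-\dim\mc{D}^2_\ell(A)$, and then Lemma \ref{lemExoticComb} applies. Your endgame is also what the paper does: pass to a CM-structure containing an imaginary quadratic field $K$, so that an exceptional subset becomes $\{\sigma : \sigma|_K=\tau\}$, whose orbit has size two. But both steps that carry the content are left as expectations.

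For the structural step, the paper uses \cite[Lemmas 7.13--7.15]{Ancona2021}, with exotic replaced by exceptional, to reduce to the situation of Lemma \ref{lemExtAncona}. There $A=E\times_k X$, with $E$ supersingular and $\dim_\Q\End^0(E)=4$. The size-four orbit comes from $N_{\mc{O}}\otimes_\Q\mf{h}^1(E)$, where $\mc{O}=\{I,J\}\subseteq\binom{\Sigma_X}{3}$ is a rank-two orbit. This tells you which field to use, namely the imaginary quadratic field $F$ with $(N_{\mc{O}})_F\cong N_I\oplus N_J$. It also tells you that only the CM-structure of $E$ has to change. Your guess (``biquadratic, or a repeated factor, hence $\End^0(A)$ noncommutative'') does not reach this. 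Noncommutativity by itself gives no control over which quadratic fields embed into which factors.

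The genuine gap is the step ``embed $K$ in $\End^0(A)$ so that every simple factor's endomorphism algebra receives $K$''. You assert it without argument, and it can fail. The field $F$ lies in $B_X$, but it must also embed into $\End^0(E)$. That is the quaternion algebra over $\Q$ ramified exactly at $p$ and $\infty$, and $F$ embeds into it if and only if $p$ does not split in $F$.

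The Tate condition is blind to this. $\Frob_E$ acts on $\mf{h}^1(E)$ by a scalar $\epsilon\in\Q$. So $\epsilon\alpha_I=q^2$, with $\alpha_I$ the eigenvalue of $\Frob_X$ on $N_I$, merely says $\alpha_I=q^2/\epsilon\in\Q$. This carries no information about how $p$ behaves in $F$.

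The paper supplies the missing input in Lemma \ref{lemExtAncona}. Assuming $F\hookrightarrow\Q_p$, it shows that $\Frob_A$ acts by $q^4$ on $P_I=\bigwedge^2_F(N_I\otimes_F\mf{h}^1(E)_F)$ and on $P_J$. Hence their crystalline realizations have Newton number $4$. The rest of Ancona's first proof of his Lemma 7.16 then gives a contradiction, by comparing Hodge and Newton numbers of the Hyodo--Kato realization of the CM lift (Appendix \ref{appHK}). Some $p$-adic argument of this kind is indispensable, and nothing in your plan would provide it.

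Once $F\subseteq\End^0(E)$ is known, take $B'=B_X\times F$. The exceptional orbits then become $\{I\sqcup\{\sigma\},J\sqcup\{\ol{\sigma}\}\}$ and $\{I\sqcup\{\ol{\sigma}\},J\sqcup\{\sigma\}\}$, as you anticipated.
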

    \begin{proof}
        The claim follows from the proof of \cite[Proposition 7.3]{Ancona2021}, which combines loc. cit. Lemmas 7.13--7.16. The statements and proofs of the first three of these lemmas remain valid when Ancona's exotic classes/motives are substituted throughout with our exceptional classes/motives (and his Assumption 7.5 is substituted with our Assumption \ref{asmExt}). In Ancona's Lemma 7.15, one must also substitute his notation $\mc{M}_{I \cap \Sigma_X}$ with our $N_{\mc{O}}$, where $\mc{O}$ is the Galois orbit of $I \cap \Sigma_X$.
        \par
        Ancona's proof of his Lemma 7.16 is more involved. Checking that it adapts to our setting thus warrants a more detailed treatment, which is contained in the proof of the following lemma. Granting that lemma, Proposition \ref{propFourOrbitToTwoOrbit} follows immediately.
    \end{proof}

    \begin{lem}[{cf. \cite[Lemma 7.16]{Ancona2021}}]\label{lemExtAncona}
        Suppose that $A = E \times_k X$, where $E$ is a supersingular elliptic curve with $\dim_\Q \End^0(E) = 4$, and $X$ is an abelian threefold. Fix a CM-structure $B_X$ for $X$, and let $\Sigma_X, L_X, G_X$ be as in Notation \ref{notCMStruct}. 
        \par
        Suppose that there exists an orbit $\mc{O} = G_X \cdot I \in \binom{\Sigma_X}{3} / G_X$ such that the associated summand $N_{\mc{O}}$ of $\mf{h}^3(X)$ given by Proposition \ref{propCMDecomp} has rank two. Suppose also that the $G_k$-action on
        \begin{gather*}
            H^4_{\et}(N_{\mc{O}} \otimes_\Q \mf{h}^1(E), \Q_\ell(2))
        \end{gather*}
        is trivial. Then there exists a CM-structure $B_E$ for $E$, with $\Sigma_E = \{\sigma, \ol{\sigma}\}$, with the following property: after replacing $L_X$ with a finite extension $L / L_X$, where $L$ is CM and Galois over $\Q$ and splits $B_E$, we have
        \begin{gather}\label{eqSSDecomp}
            N_{\mc{O}} \otimes_\Q \mf{h}^1(E) \cong N_{\Gal(L / \Q) \cdot (I \sqcup \{\sigma\})} \oplus N_{\Gal(L / \Q) \cdot (I \sqcup \{\ol{\sigma}\})}
        \end{gather}
        in $\mc{M}(k)_\Q$. Here we equip $A$ with the product CM-structure $B = B_X \times B_E$, and $N_{\Gal(L / \Q) \cdot (I \sqcup \{\sigma\})}$ is as in Proposition \ref{propCMDecomp}. Moreover, both summands on the right-hand side of \eqref{eqSSDecomp} have rank two.
    \end{lem}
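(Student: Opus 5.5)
The plan is to choose the CM-structure $B_E$ as a copy of the imaginary quadratic field $K$ attached to the orbit $\mc{O}$. Then check that, with this choice, the $\Gal(L/\Q)$-orbits of $I\sqcup\{\sigma\}$ and $I\sqcup\{\ol{\sigma}\}$ each have size two. Finally, descend the resulting decomposition over $L$ to $\Q$, as in the proof of Proposition \ref{propTwoOrbit}.

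First I record the structure of $\mc{O}$. Since $N_{\mc{O}}$ has rank two, $\mc{O}$ has two elements. It is stable under complex conjugation, and $\ol{I}\neq I$ because $|I|=3$ is odd while a conjugation-stable subset is a union of conjugate pairs. Hence $\mc{O}=\{I,\ol{I}\}$. Let $H_X\subseteq G_X$ be the stabilizer of $I$, which has index two, and set $K=L_X^{H_X}$. Since conjugation swaps $I$ and $\ol{I}$, $K$ is imaginary quadratic.

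Next I use the hypotheses on $E$. Because $\dim_\Q\End^0(E)=4$ and Assumption \ref{asmExt} holds, $\End^0(E)=D$ is the quaternion algebra over $\Q$ ramified exactly at $p$ and $\infty$. Moreover $\Frob_E$ lies in the center $\Q$, so $\Frob_E=\pm q^{1/2}$. The hypothesis that $G_k$ acts trivially on $H^4_{\et}(N_{\mc{O}}\otimes\mf{h}^1(E),\Q_\ell(2))$ then says $\alpha_I\cdot(\pm q^{1/2})=q^2$, i.e. $\alpha_I=\pm q^{3/2}$.

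The key step, and the one I expect to be the main obstacle, is to show that $K$ embeds in $D$, i.e. that $p$ does not split in $K$. I would argue with $p$-adic valuations of Frobenius eigenvalues. Fix a place $w\mid p$ of $L_X$. The slopes $v_w(\sigma(\Frob_X))/v_w(q)$ for $\sigma\in I$ lie in $[0,1]$ and sum to $3/2$, while the same holds for $\ol{I}$ with complementary values.

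Suppose $p$ were split in $K$. Then every decomposition group $D_w$ lies in $H_X$, so $D_w$ stabilizes $I$. Using this, I would try to show that the multiset of slopes on $I$ is a union of $D_w$-orbits that is forced either to be integral-valued in total, or to pair up into subsets $P$ with $\alpha_P=q$. The latter would contradict the genuinely exceptional nature of the situation. In effect this is the Shimura--Taniyama-style computation underlying Ancona's Lemma 7.16, which only involves Frobenius eigenvalues, so it should transfer verbatim. This is the point I would check most carefully. If it fails, I would instead try to show directly that $p$ is inert or ramified in $K$ by comparing the $w$- and $\ol{w}$-valuations of $\alpha_I\in K$.

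Granting the embedding $K\hookrightarrow D$, take $B_E$ to be its image. This is a maximal commutative subfield of $D$, hence a CM-structure for $E$. Choose $L\supseteq L_X$ finite, CM and Galois over $\Q$; it automatically splits $B_E\cong K\subseteq L_X$. Label $\Sigma_E=\{\sigma,\ol{\sigma}\}$ so that $\sigma$ restricts to the inclusion $K\subseteq L$.

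Let $H\subseteq\Gal(L/\Q)$ be the preimage of $H_X$, which is the subgroup fixing $K$. Then $H$ stabilizes $I$, and it fixes $\sigma$ because $\sigma(B_E)=K$. Elements outside $H$ send $I\mapsto\ol{I}$ and $\sigma\mapsto\ol{\sigma}$. Hence the orbit of $I\sqcup\{\sigma\}$ is exactly $\{I\sqcup\{\sigma\},\ \ol{I}\sqcup\{\ol{\sigma}\}\}$, and likewise the orbit of $I\sqcup\{\ol{\sigma}\}$ is $\{I\sqcup\{\ol{\sigma}\},\ \ol{I}\sqcup\{\sigma\}\}$. Both have size two.

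Over $L$, Proposition \ref{propCMDecomp} for the product CM-structure $B=B_X\times B_E$ gives
\begin{gather*}
(N_{\mc{O}}\otimes\mf{h}^1(E))_L\cong (M_I\oplus M_{\ol{I}})\otimes(M_\sigma\oplus M_{\ol{\sigma}}),
\end{gather*}
which is the sum of the four rank-one motives $M_{I\sqcup\{\sigma\}}$, $M_{I\sqcup\{\ol{\sigma}\}}$, $M_{\ol{I}\sqcup\{\sigma\}}$ and $M_{\ol{I}\sqcup\{\ol{\sigma}\}}$. This isomorphism is $\Gal(L/\Q)$-equivariant when viewed in $\mc{M}(k)_\Q$. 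Grouping the four summands by the two orbits above and applying Galois descent, exactly as at the end of the proof of Proposition \ref{propTwoOrbit}, yields \eqref{eqSSDecomp} with both summands of rank two.
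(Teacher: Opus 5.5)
Your proposal has a genuine gap at the step you flag yourself: the claim that $p$ does not split in $K$. Once that is known, the rest of your argument is correct: the structure $\mc{O}=\{I,\ol{I}\}$, the choice $B_E\cong K\subseteq\End^0(E)$, the two size-two orbits, and the Galois descent. But this step is the real content of the lemma, and you do not prove it. Your claim that Ancona's argument ``only involves Frobenius eigenvalues'' and transfers verbatim is not accurate. His first proof is $p$-adic Hodge theoretic, and his algebraicity hypothesis is used exactly to compute a Newton number; that computation is what must be redone here.

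More to the point, Frobenius eigenvalues plus $D_w$-invariance of slopes give no contradiction on their own. Nothing you wrote rules out slopes $\tfrac12$ on $D_w$-fixed embeddings, which is what the Weil number $\sqrt q$ would produce inside a $p$-split imaginary quadratic field. Your second branch, pairing into sets $P$ with $\alpha_P=q$, has nothing to contradict: there is no exceptionality hypothesis on $X$, and $|I|=3$. Your fallback of comparing the $w$- and $\ol{w}$-valuations of $\alpha_I$ is vacuous, because you already showed $\alpha_I=\pm q^{3/2}\in\Q$.

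What is missing is an integrality input. The paper, following Ancona, gets it from weak admissibility. Suppose $K\hookrightarrow\Q_p$. Then the rank-one motives $P_I=\bigwedge^2_K(N_I\otimes_K\mf{h}^1(E)_K)$ and $P_{\ol I}$ acquire weakly admissible Hyodo--Kato realizations through the CM lift. Your identity $\epsilon\alpha_I=q^2$ is exactly what the paper uses to show that $\Frob_A$ acts on them by $q^4$, so $t_N=4$. On the lift, the Hodge number is $2h+1$, where $h\in\Z$ is the Hodge number of the lifted $N_I$ and the $1$ comes from $\mf{h}^1(E)$. So $t_H$ is odd, a contradiction.

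Alternatively, you can complete your slope approach with Tate's theorem. For each simple factor $X_j$ of $X$ with Frobenius $\pi_j$, the field $L_j$ is a maximal subfield of the division algebra $\End^0(X_j)$, so it splits it. Since $\mathrm{inv}_v\End^0(X_j)=[\Q(\pi_j)_v:\Q_p]\,v(\pi_j)/v(q)$, every place $u\mid p$ of $L_j$ satisfies $[L_{j,u}:\Q_p]\,u(\pi_j)/u(q)\in\Z$. The $D_w$-orbit in $\Sigma_j$ over $u$ has exactly $[L_{j,u}:\Q_p]$ elements, all of slope $u(\pi_j)/u(q)$, so every $D_w$-orbit has integral total slope. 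If $p$ split in $K$, then $I$ would be a union of $D_w$-orbits, and $\tfrac32=\sum_{\sigma\in I}s(\sigma)$ would be an integer. With either input supplied, your proof goes through.
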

    \begin{proof}[Proof of Lemma \ref{lemExtAncona}]
        The statement of the present lemma differs from that of \cite[Lemma 7.16]{Ancona2021} only in that Ancona makes the stronger assumption that $H^4_{\et}(N_{\mc{O}} \otimes_\Q \mf{h}^1(E), \Q_\ell(2))$ is spanned by algebraic classes. He gives two proofs of his lemma, and we check that the first one still goes through under our hypotheses. We refer to Appendix \ref{appHK} for needed background on Hyodo--Kato realization.
        \par
        Since $N_{\mc{O}}$ has rank two, we may write $\mc{O} = \{I, J\}$, with $I, J \in \binom{\Sigma_X}{3}$. There exists a quadratic number field $F$ such that $(N_{\mc{O}})_F$ decomposes in $\mc{M}(k)_F$ as 
        \begin{gather*} 
            (N_{\mc{O}})_F \cong N_I \oplus N_{J},
        \end{gather*}
        where $N_I$ and $N_{J}$ have rank one. Indeed, using Proposition \ref{propCMDecomp}, we write
        \begin{gather*}
            (N_{\mc{O}})_{L_X} \cong M_I \oplus M_{J},
        \end{gather*}
        and take $F \subseteq L_X$ to be the field fixed by the stabilizer $H$ of $I$ under $G_X$. We then obtain $N_I$ from $M_I$ via $\Gal(L_X / F)$-descent, and similarly obtain $N_{J}$.
        \par
        Assume for a contradiction that $F$ embeds into $\Q_p$. Fixing such an embedding, we define a crystalline realization functor on $\mc{M}(k)_F$ by extending scalars to $\mc{M}(k)_{\Q_p}$ and then taking crystalline realization there. Let $\varphi$ denote the crystalline Frobenius, and recall from the appendix that $\Frob_A$ acts on the crystalline cohomology of $A$ via $\varphi^f$, where $q = p^f$.
        \par
        Using the comparison between Hyodo--Kato and crystalline cohomology, we observe that Ancona's only use of his stronger hypothesis in his first proof is in checking that the crystalline realizations of the exterior powers
        \begin{align*}
            P_I &= \bigwedge\limits_F^2(N_I \otimes_F \mf{h}^1(E)_F), \\
            P_{J} &= \bigwedge\limits_F^2(N_{J} \otimes_F \mf{h}^1(E)_F)
        \end{align*}
        have Newton number 4. In fact, he checks the stronger statement that $\Frob_A$ acts as multiplication by $q^4$ on these realizations. It will thus suffice to perform this same check under our assumptions. Then, following the rest of Ancona's original argument will complete the proof. 
        \par
        The fact that $\End^0(E)$ has dimension four, and in particular is a central simple $\Q$-algebra, implies that $\Frob_E$ acts on $\mf{h}^1(E)$ by a scalar $\epsilon \in \Q$. On the other hand, Proposition \ref{propCMDecomp} implies that $\Frob_X$ acts on $M_I = (N_I)_{L_X}$ by a scalar $\alpha_I \in L_X$. Because $\Frob_X$ already acts on $N_I$ before extending scalars to $L_X$, we must have $\alpha_I \in L_X^H = F$. Then $\Frob_A$ acts on $N_{I} \otimes_F \mf{h}^1(E)_F$ by $\epsilon \alpha_I$, and acts by the same value on any realization into a Weil cohomology theory. 
        \par
        If we fix a place $\lambda' | \ell$ of $F$, our assumption that $G_k$ acts trivially on $H^4_{\et}(N_{\mc{O}} \otimes_\Q \mf{h}^1(E), \Q_\ell(2))$ implies that $\Frob_A$ acts by $q^2$ on $H^4_{\et}(N_I \otimes_F \mf{h}^1(E)_F, F_{\lambda'})$. We thus find that $\epsilon \alpha_I = q^2$. It follows that $\Frob_A$ acts by $q^4$ on $P_I$, and likewise on its crystalline realization. By the same argument, $\Frob_A$ acts by $q^4$ on the crystalline realization of $P_J$. As discussed above, this allows us to conclude.
    \end{proof}

    \begin{cor}\label{corExoticSubWeil}
        After replacing $k$ with a finite extension, there exists a CM-structure $B'$ for $A$ such that every exceptional motive $N_{\mc{O}}$ associated with $B'$ is isomorphic to a Weil motive $\mc{W}_K(A)$, for some imaginary quadratic field $K \subseteq B'$.
    \end{cor}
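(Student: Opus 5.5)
The plan is to combine Proposition \ref{propFourOrbitToTwoOrbit} with Proposition \ref{propTwoOrbit}.

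First, apply Proposition \ref{propFourOrbitToTwoOrbit}. After replacing $k$ by a finite extension, this gives a CM-structure $B'$ for $A$ such that every exceptional $G$-orbit with respect to $B'$ has size two. We should then restore Assumption \ref{asmExt} for the enlarged field. This is harmless: both conditions in the assumption are stable under further finite extension, since condition (ii) only concerns classes fixed by finite-index subgroups. We also need the notion of CM-structure to be unchanged by the extension, which holds because $\End^0(A)$ does not change by (i). So all results of Sections \ref{secCmStruct} and \ref{secRedWeilMotive} remain available for $B'$. We then fix the notation of Notation \ref{notCMStruct} relative to $B'$.

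Next, let $\mc{O}$ be an exceptional orbit with respect to $B'$ and pick $I \in \mc{O}$. By Lemma \ref{lemExoticComb}(i), the sets $I$ and $\ol{I}$ partition $\Sigma$, so in particular $\ol{I} \neq I$. Complex conjugation commutes with every element of $G$ because $L$ is CM and Galois, and it acts on $\binom{\Sigma}{4}$ as an element of $G$. Hence $\ol{I} \in \mc{O}$. Since $|\mc{O}| = 2$, it follows that $\mc{O} = \{I, \ol{I}\}$.

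This is exactly the hypothesis of Proposition \ref{propTwoOrbit}. That proposition produces an imaginary quadratic field $K$ and an embedding $K \hookrightarrow B' \subseteq \End^0(A)$ with $N_{\mc{O}} \cong \mc{W}_K(A)$, where the Weil motive is formed using this embedding. Since $\mc{O}$ was an arbitrary exceptional orbit, this is the assertion of Corollary \ref{corExoticSubWeil}.

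The only step needing care is the bookkeeping: checking that the finite extension in Proposition \ref{propFourOrbitToTwoOrbit} is compatible with Assumption \ref{asmExt}, so that Lemma \ref{lemExoticComb} and Proposition \ref{propTwoOrbit} apply to $B'$. Everything else is a direct combination of the cited results.
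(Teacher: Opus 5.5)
Your proposal is correct and matches the paper's argument, which simply combines Propositions \ref{propFourOrbitToTwoOrbit} and \ref{propTwoOrbit}. The paper leaves your bookkeeping implicit, and it is sound: a size-two exceptional orbit must be $\{I, \ol{I}\}$ because complex conjugation lies in $G$ and $\ol{I} \neq I$ by Lemma \ref{lemExoticComb}(i), and Assumption \ref{asmExt} persists under finite extension. Centrality of complex conjugation is not needed, only the fact that it belongs to $G$.
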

    \begin{proof}
        Combine Propositions \ref{propTwoOrbit} and \ref{propFourOrbitToTwoOrbit}.
    \end{proof}

    \section{Lifting and specialization}\label{secLift}
    Again let $A$ be an abelian fourfold over a finite field $k \cong \F_q$, and again require that Assumption \ref{asmExt} holds. Fix a CM-structure $B$ for $A$, and associated notation as in Notation \ref{notCMStruct}. We give here two propositions which will ultimately allow us to prove Theorem \ref{thmTate} via specialization from characteristic zero.

    \begin{prop}[{\cite[Theorem 6.10 and Corollary 6.12]{Ancona2021}}]\label{propCMDecompLift}
         After replacing $k$ with a finite extension, there exists a $p$-adic field $T / \Q_p$ with residue field $k$, an abelian scheme $\mc{A}$ over $\mc{O}_T$, and an embedding $B \hookrightarrow \End^0(\mc{A})$, such that there exists a $B$-equivariant isogeny $f: \mc{A}_k \to A$. Moreover, the decompositions of $\mf{h}(A)$ and $\mf{h}(A)_L$ described in Proposition \ref{propCMDecomp} are induced via specialization by decompositions of the relative Chow motives $\mf{h}(\mc{A}) \in \mc{M}(\mc{O}_T)_\Q$ and $\mf{h}(\mc{A})_L \in \mc{M}(\mc{O}_T)_L$.
    \end{prop}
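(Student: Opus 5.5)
The plan is to combine the Serre--Tate/Honda--Tate style CM lifting theorem (in the form proved by Tate and refined by Yu and others) with the functoriality of Deninger--Murre decompositions over a base. First, since $B$ is a product $L_1 \times \dots \times L_t$ of CM fields, each of degree $2\dim A_j$ over $\Q$, the pair $(A, B)$ is an abelian variety with \enquote{sufficiently many complex multiplications} in the sense of Tate's lifting theorem. Decomposing $A \sim A_1 \times_k \dots \times_k A_t$, I would apply the CM lifting theorem (\cite{Tate1971}, or Yu's refinement) separately to each $(A_j, L_j)$. After passing to a finite extension of $k$ (so that all lifts are defined over a common $p$-adic field with residue field $k$), this produces abelian schemes $\mc{A}_j$ over $\mc{O}_{T}$ with $L_j \hookrightarrow \End^0(\mc{A}_j)$ and $L_j$-equivariant isogenies $\mc{A}_{j,k} \to A_j$. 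Setting $\mc{A} = \prod_j \mc{A}_j$ gives the embedding $B \hookrightarrow \End^0(\mc{A})$ and the $B$-equivariant isogeny $f$. The only subtlety is that Tate's lifting is a priori only up to isogeny and after a finite extension of the residue field, so the uniform choice of $T$ and the replacement of $k$ must be handled first.

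Second, for the motivic part, I would use the relative Deninger--Murre (or Künnemann) decomposition $\mf{h}(\mc{A}) = \bigoplus_n \mf{h}^n(\mc{A})$ in $\mc{M}(\mc{O}_T)_\Q$, together with $\mf{h}^\bullet(\mc{A}) \cong \Sym^\bullet \mf{h}^1(\mc{A})$, both compatible with base change to the special fiber (Remark \ref{remDecompOverDvrs}). The action of $B \subseteq \End^0(\mc{A})$ on $\mf{h}^1(\mc{A})$ then makes $\mf{h}^1(\mc{A})$ a $B$-module relative motive, and the construction of \eqref{eqFToLDecomp} and Proposition \ref{propCMDecomp} can be run verbatim in $\mc{M}(\mc{O}_T)_L$. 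This gives relative summands $\mc{M}_\sigma$, $\mc{M}_I = \bigotimes_{\sigma\in I}\mc{M}_\sigma$, and after Galois descent, $\mc{N}_{\mc{O}}$. Since specialization $\mc{M}(\mc{O}_T)_L \to \mc{M}(k)_L$ is a tensor functor commuting with the $B$-action (the specialized action being the one transported through $f$), and $f$ induces an isomorphism $\mf{h}(\mc{A}_k) \cong \mf{h}(A)$ of $B$-module motives, the specialized decomposition coincides with the one in Proposition \ref{propCMDecomp}. The point to verify is that the idempotents are determined canonically by the $B$-action and the Chow--Künneth projectors, so they commute with specialization.

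I expect the main obstacle to be the first step. Specifically, the difficulty is guaranteeing a lift over a \emph{$p$-adic field with residue field exactly $k$}, equivariant for the whole CM-structure $B$ rather than just the center. I would address this by invoking Tate's theorem in the form \enquote{$(A_j, L_j)$ lifts up to $L_j$-isogeny to a CM abelian scheme over a finite extension of $W(\ol{k})$}, then descending the lift to a finite extension of $W(k')[1/p]$ for some finite $k'/k$, and finally enlarging $k$ to $k'$ and taking $T$ to be a totally ramified extension of $W(k)[1/p]$ over which all $\mc{A}_j$ are defined with good reduction (good reduction is automatic for CM abelian varieties after a finite extension). These are exactly the steps packaged in \cite[Theorem 6.10]{Ancona2021}.
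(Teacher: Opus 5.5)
Your proposal is correct and follows essentially the route behind the cited results (the paper itself only cites them). Ancona's Theorem 6.10 is Tate's theorem: after a finite extension of $k$, each $(A_j, L_j)$ lifts to a CM abelian scheme over $\mc{O}_T$ up to an $L_j$-equivariant isogeny, and these isogenies assemble into a $B$-equivariant one. Corollary 6.12 then comes, as you describe, from running the $B$-module and Deninger--Murre--K\"unnemann constructions for relative motives over $\mc{O}_T$ (Remark \ref{remDecompOverDvrs}) and transporting along the isomorphism $\mf{h}(A) \cong \mf{h}(\mc{A}_k)$ induced by $f$.
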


    \begin{prop}\label{propSpec}
        Let $R$ be a complete discrete valuation ring with characteristic zero fraction field $T$, and residue field $k$. Fix field isomorphisms $\Co \cong \ol{T}$ and $\Co \cong \ol{\Q}_\ell$. Let $\mc{N} \in \mc{M}(R)_\Q$ be a relative motive over $R$, with geometric generic fiber $\mc{N}_{\ol{T}}$, and geometric special fiber $\mc{N}_{\ol{k}}$. Suppose that the Betti realization of the geometric generic fiber $H^\bullet_{B}(\mc{N}_{\ol{T}}, \Co)$ is the $\Co$-span of algebraic cycles defined over $\ol{T}$. Then there exists a finite extension $k' / k$ such that the $\ell$-adic realization of the geometric special fiber $H^\bullet_{\et}(\mc{N}_{\ol{k}}, \Q_\ell)$ is the $\Q_\ell$-span of algebraic cycles defined over $k'$. 
    \end{prop}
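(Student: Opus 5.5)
The plan is to carry algebraic cycles from the geometric generic fiber to the geometric special fiber using the specialization map on Chow groups of relative motives. Compatibility of cycle class maps with the specialization isomorphism on étale cohomology then turns the Betti spanning hypothesis into the desired $\ell$-adic spanning.

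First I will pass from Betti to $\ell$-adic cohomology on the generic side. Artin's comparison isomorphism, together with the fixed isomorphisms $\Co \cong \ol{T}$ and $\Co \cong \ol{\Q}_\ell$, identifies $H^\bullet_B(\mc{N}_{\ol{T}}, \Co)$ with $H^\bullet_{\et}(\mc{N}_{\ol{T}}, \Q_\ell) \otimes_{\Q_\ell} \ol{\Q}_\ell$. This identification is compatible with cycle class maps, because $\mc{N}$ is cut out of a motive $\mf{h}(\mc{X})(m)$ of a smooth projective $R$-scheme by a projector, and both realizations are functorial for correspondences. The $\ol{\Q}_\ell$-span of the algebraic classes is therefore all of $H^\bullet_{\et}(\mc{N}_{\ol{T}}, \ol{\Q}_\ell)$. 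Since the cycle classes are $\Q_\ell$-rational, their $\Q_\ell$-span already equals $H^\bullet_{\et}(\mc{N}_{\ol{T}}, \Q_\ell)$; this is the usual fact that a $\Q_\ell$-subspace whose scalar extension is everything must itself be everything. Cohomology is finite-dimensional, so finitely many cycles $z_1, \dots, z_N$ suffice. Each is defined over $\ol{T}$, hence over some finite extension $T'/T$, and all of them can be taken over a single such $T'$.

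Next I will base change to the integral closure $R'$ of $R$ in $T'$; this is again a complete discrete valuation ring, with finite residue extension $k'/k$. By the compatibility of relative motives with base change (Remark \ref{remDecompOverDvrs}), I replace $\mc{N}$ by $\mc{N}_{R'}$. Fulton's specialization map $\CH^\bullet(\mc{X}_{T'}) \to \CH^\bullet(\mc{X}_{k'})$ is compatible with correspondences, so it induces a map $\CH^0(\mc{N}_{T'}(r)) \to \CH^0(\mc{N}_{k'}(r))$. On the other side, smooth and proper base change gives a specialization isomorphism $H^\bullet_{\et}(\mc{N}_{\ol{T}}, \Q_\ell) \cong H^\bullet_{\et}(\mc{N}_{\ol{k}}, \Q_\ell)$, again compatible with correspondences.

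The main obstacle is the compatibility of cycle classes with specialization: the class of $\mathrm{sp}(z)$ should be the image of the class of $z$ under the specialization isomorphism. I plan to verify this on $\mc{X}$ itself and then apply the projector. One way is to take the Zariski closure of $z$ in $\mc{X}_{R'}$ and use the cycle class in $H^{2r}_{\et}(\mc{X}_{R'}, \Q_\ell(r))$, which restricts to the classes on both fibers; the identification of $H_{\et}$ of $\mc{X}_{R'}$ with that of either geometric fiber is proper base change. Alternatively, I can cite the standard reference (Fulton, Example 20.3.5, or SGA $4\tfrac12$). Granting this compatibility, the specialized cycles $\mathrm{sp}(z_i)$ are defined over $k'$, and their classes span $H^\bullet_{\et}(\mc{N}_{\ol{k}}, \Q_\ell)$ over $\Q_\ell$, because the specialization isomorphism carries a spanning set to a spanning set.
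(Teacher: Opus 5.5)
Your proposal is correct and is essentially the paper's proof. The steps match: Artin comparison, compatible with correspondences, gives $\ol{\Q}_\ell$-spanning on $\mc{N}_{\ol{T}}$; finitely many spanning cycles are defined over some finite $T'/T$; and specialization to $k'$ uses the compatibility of cycle classes with the specialization isomorphism, which the paper simply cites (Fulton, Example 20.3.5, and Kahn's book). The only difference is cosmetic: you descend from $\ol{\Q}_\ell$ to $\Q_\ell$ on the generic fiber, whereas the paper does so on the special fiber. One small correction to your optional sketch of that compatibility: proper base change identifies the cohomology of $\mc{X}_{R'}$ with that of $\mc{X}_{k'}$, not of a geometric fiber. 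You should therefore pull the class back to a strict henselization of $R'$, where proper base change plus smoothness give the specialization isomorphism, before comparing.
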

    \begin{proof}
        The Artin comparison theorem between Betti cohomology with coefficients in $\Co$ and \'etale cohomology with coefficients in $\ol{\Q}_\ell$ is compatible with pullbacks, cycle class maps, Tate twists, Poincar\'e duality, and cup products (see e.g. \cite{SGA4III}, in particular Th\'eor\`eme XVI.4.1, and \cite[Section 1]{Deligne1982}). It is hence compatible with the composition of algebraic correspondences, and induces a natural tensor isomorphism of realization functors on $\mc{M}(\Co)_\Q$ valued in graded $\Co$-vector spaces. The fact that $H^\bullet_{B}(\mc{N}_{\ol{T}}, \Co)$ is spanned by classes of algebraic cycles defined over $\ol{T}$ thus implies that $H^\bullet_{\et}(\mc{N}_{\ol{T}}, \ol{\Q}_\ell)$ is spanned by the classes of algebraic cycles defined over $\ol{T}$. We may then choose a finite spanning set of cycles which are all defined over a finite extension $T' / T$, with residue field extension $k' / k$. We deduce via specialization (\cite[Theorem 6.25]{Kahn2020}, \cite[Example 20.3.5]{Fulton1998}) that $H^\bullet_{\et}(\mc{N}_{\ol{k}}, \ol{\Q}_\ell)$ is spanned by classes of algebraic cycles defined over $k'$. Since the extension of scalars functor from $\Q_\ell$-vector spaces to $\ol{\Q}_\ell$-vector spaces is faithful and exact, we obtain the same statement with $\Q_\ell$-coefficients.
    \end{proof}

    \section{Proof of Theorem \ref{thmTate}}\label{secProofThmTate}
    Again let $A$ be an abelian fourfold over a finite field $k \cong \F_q$, and again require that Assumption \ref{asmExt} holds. Fix a CM-structure $B$ for $A$ as in Corollary \ref{corExoticSubWeil}, and associated notation as in Notation \ref{notCMStruct}. The corollary implies that to prove the Tate conjecture for $A$, it suffices to prove that for all imaginary quadratic subfields $K \subseteq B$ such that the associated Weil motive $\mc{W}_K(A)$ is exceptional (Definition \ref{defExoticSubset}), the $\ell$-adic realization $H^4_{\et}(\mc{W}_K(A), \Q_\ell(2))$ is spanned by algebraic classes. We now fix such a $K$.
    \par
    Replacing $k$ with a finite extension if necessary, we fix a finite extension $T / \Q_p$ with residue field $k$, and an abelian scheme $\mc{A}$ over $\mc{O}_T$, as in Proposition \ref{propCMDecompLift}. The proposition gives a direct summand $\mc{W}_K(\mc{A})$ of $\mf{h}(\mc{A})$ which specializes to $\mc{W}_K(A)$.
    \par
    Fix also isomorphisms $\ol{\Q}_p \cong \ol{T}$ and $\ol{T} \cong \Co$, and an embedding $\tau: K \hookrightarrow \Co$. We have inclusions $K \subseteq B \subseteq \End^0(\mc{A})$, and we let
    \begin{gather*}
        a = \dim_\Co H^{1,0}_B(\mc{A}_\Co, \Co)_\tau
    \end{gather*}
    be the dimension of the $\tau$-eigenspace of $K$ acting on $H^{1,0}_B(\mc{A}_\Co, \Co)$. Then \eqref{eqWeilBetti} exhibits the Betti realization $H^4_B(\mc{W}_K(\mc{A}_{\Co}), \Co)$ as the direct sum of two one-dimensional Hodge structures, with respective Hodge types $(a, 4-a)$ and $(4-a, a)$. Replacing $\tau$ with its conjugate if necessary, we may assume that $a \in \{2, 3, 4\}$.
    \par
    Suppose first that $a = 2$. Then the Betti realization of $\mc{W}_K(\mc{A}_{\Co})$ is spanned by Hodge classes, so the proof of the Hodge conjecture for abelian fourfolds \cite[Corollary 1.6.1]{Markman2025} implies that $H^4_B(\mc{W}_K(\mc{A}_{\Co}), \Co(2))$ is spanned by algebraic classes. After replacing $k$ with a finite extension if necessary, we find from Proposition \ref{propSpec} that $H^4_{\et}(\mc{W}_K(A), \Q_\ell(2))$ is also spanned by algebraic classes.
    \par
    In the case $a = 4$, the argument of \cite[Remark A.8(2)]{Ancona2021} implies that $A$ is isogenous to a power of a supersingular elliptic curve, and hence satisfies the Tate conjecture (e.g. by \cite[Theorem 2.9]{FL2021}). We thus assume that $a = 3$ for the remainder of the section.
    \begin{lem}\label{lemPNonSplit}
        The prime $p$ does not split in $K$.
    \end{lem}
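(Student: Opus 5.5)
We argue by contradiction: suppose $p$ splits in $K$, so that $K \otimes_\Q \Q_p \cong \Q_p \times \Q_p$ and both embeddings $K \hookrightarrow \ol{\Q}_p \cong \Co$ factor through $\Q_p$. The strategy is to compare two descriptions of the one-dimensional piece $\bigwedge^4_\Co H^1_B(\mc{A}_\Co,\Co)_\tau$. The first is its Hodge type $(3,1)$. The second is the slope of Frobenius on the corresponding crystalline (equivalently Hyodo--Kato) realization of $\mc{W}_K(A)$. On that realization Frobenius must act with the slope forced by the exceptional Tate condition $\alpha_I = q^2$.

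First, since $p$ splits, the $K$-action on $\mf{h}^1(\mc{A})$ yields a decomposition of $p$-adic realizations into $\tau$- and $\ol{\tau}$-eigenspaces that are already defined over $\Q_p$ (or over $T$ and the maximal unramified subfield $T_0$). On the crystalline side we use the crystalline realization over $W(k)[1/p]$, extended $\Q_p$-linearly via $\tau$. The projector cutting out $\mf{h}^1(A)_\tau$ is then $\Q_p$-rational, so $D_\tau := H^1_{\cris}(A/W(k))[1/p]_\tau$ is a sub-$F$-isocrystal of rank $4$, stable under $\varphi$. The reason is that $K$ acts through endomorphisms, which commute with $\varphi$, and the idempotents of $K\otimes\Q_p$ are $\Q_p$-rational. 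By the comparison of Appendix \ref{appHK}, the de Rham realization of $\mc{A}_T$ is identified with $D_\tau \otimes T$ in a way compatible with the $K$-action. The Hodge filtration on the $\tau$-part has $\dim \Fil^1 = a = 3$.

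Second, weak admissibility (Colmez--Fontaine) of the filtered $\varphi$-module $H^1_{\cris}$ applies to the sub-object $D_\tau$, which is $\varphi$-stable with induced filtration. The same holds for $D_{\ol{\tau}}$. Both sub-objects are $\varphi$-stable and their direct sum is the whole module, so each has $t_N = t_H$. Hence $t_N(D_\tau) = t_H(D_\tau) = 3$, and therefore $\bigwedge^4 D_\tau$ has Newton slope $3$ (with respect to $\varphi$). Translating to $\Frob_A = \varphi^f$: the eigenvalue of $\Frob_A$ on $\bigwedge^4 D_\tau$ has $p$-adic valuation $3f$ under the embedding $\tau$-compatible with the splitting.

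Third, $\bigwedge^4 D_\tau$ is the crystalline realization of $\Sym^4_K\mf{h}^1(A)_\tau = M_I$ (with $I$ the exceptional subset matched to $\tau$, as in Proposition \ref{propTwoOrbit}). $\Frob_A$ acts on it by the scalar $\alpha_I = q^2$, since $I$ is Tate and this scalar is realization-independent. Its valuation is $2f \neq 3f$, which is a contradiction. The main obstacle is the second step: making the Hyodo--Kato/crystalline comparison $K$-equivariant and ensuring the $\tau$-eigenspace is a genuine sub-filtered-$\varphi$-module, including the compatibility of the chosen $p$-adic embedding of $K$ with the fixed $\tau: K\hookrightarrow\Co$. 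This rationality over $\Q_p$ is exactly what fails when $p$ is inert or ramified, where $\varphi$ can swap or mix eigenspaces. This mirrors Ancona's Newton-versus-Hodge argument in the proof of Lemma \ref{lemExtAncona}.
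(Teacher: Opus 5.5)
Your argument is correct and is essentially the paper's. Both assume $K$ embeds in $\Q_p$ and contradict weak admissibility: the $\tau$-eigenspace has Hodge number $a=3$ (via $\ol{\Q}_p\cong\Co$), while $\Frob_A=\varphi^f$ acts by $q^2$ on the corresponding rank-one piece, giving Newton number $2$. The only difference is bookkeeping: the paper applies weak admissibility directly to the rank-one Hyodo--Kato realization of the $\rho$-summand of $\mc{W}_K(\mc{A})\otimes_\Q\Q_p$, whereas you apply it to the rank-four summand $D_\tau$ of $H^1$ and then pass to $\bigwedge^4 D_\tau$, which changes neither $t_H$ nor $t_N$.
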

    \begin{proof}
        We again follow the argument of \cite[Lemma 7.16]{Ancona2021}, referring to Appendix \ref{appHK} for background on Hyodo--Kato realization. Suppose for a contradiction that $K$ embeds in $\Q_p$, and let $\rho, \ol{\rho}: K \hookrightarrow \Q_p$ be a conjugate pair of embeddings. As in \eqref{eqFToLDecomp}, we obtain a decomposition
        \begin{gather*}
            \mc{W}_K(\mc{A}) \otimes_\Q \Q_p \cong \mc{M}_{\rho} \oplus \mc{M}_{\ol{\rho}}
        \end{gather*}
        in $\mc{M}(\mc{O}_T)_{\Q_p}$. Let $H_\rho$ and $H_{\ol{\rho}}$ be the respective Hyodo--Kato realizations of the summands. 
        \par
        Using our previously fixed isomorphism $\ol{\Q}_p \cong \Co$, the Betti realizations of $\mc{M}_\rho$ and $\mc{M}_{\ol{\rho}}$ are one-dimensional, and we may assume that they have respective Hodge types $(3, 1)$ and $(1, 3)$. By compatibility of Hyodo--Kato and de Rham realizations, we compute the Hodge numbers as
        \begin{gather*}
            t_H(H_\rho) = 3, \\
            t_H(H_{\ol{\rho}}) = 1.
        \end{gather*}
        Note that because $\mc{W}_K(A)$ is exceptional in the sense of Definition \ref{defExoticSubset}, the Frobenius $\Frob_A$ acts on $\mc{W}_K(A) \otimes_\Q L$ via $q^2$. Since the extension of scalars functor $\mc{M}(k)_\Q \to \mc{M}(k)_L$ is faithful, this implies that $\Frob_A$ acts on $\mc{W}_K(A)$ itself via $q^2$. Then $\Frob_A$ also acts on $H_\rho$ and $H_{\ol{\rho}}$ by $q^2$. But $\Frob_A$ acts on the $\varphi$-modules $H_\rho$ and $H_{\ol{\rho}}$ via $\varphi^f$, so the Newton numbers are
        \begin{gather*} 
            t_N(H_\rho) = t_N(H_{\ol{\rho}}) = 2.
        \end{gather*}
        This contradicts the weak admissibility of $H_\rho$ and $H_{\ol{\rho}}$.
    \end{proof}
    We will now make use of some standard facts and terminology from the theory of complex multiplication of abelian varieties, referring to \cite[Chapter 10]{Milne2017} for background. Replacing $T$ with a finite extension if necessary, there exists an elliptic curve $E$ over $T$ with an isomorphism $\eta: K \cong \End^0(E)$, such that $E$ has good reduction. Let $\mc{E}$ be the N\'eron model of $E$ over $\mc{O}_T$. By Lemma \ref{lemPNonSplit} and results of Deuring \cite[Theorem 1.1]{Blake2014}, the reduction $\mc{E}_k$ is supersingular, and we may further assume that $\dim_\Q \End^0(\mc{E}_k) = 4$. Choose $\eta$ such that the CM-type of $(E_\Co, \eta)$ is the conjugate of $\tau$. 
    \par
    Form the product $\mc{Y} = \mc{A} \times_{\mc{O}_T} \mc{E}^2$, and equip it with the diagonal $K$-action. Then the associated Weil motive satisfies 
    \begin{gather}\label{eqWeilYTensor}
        \mc{W}_K(\mc{Y}) \cong \mc{W}_K(\mc{A}) \otimes_K \mc{W}_K(\mc{E}^2),
    \end{gather}
    by Lemma \ref{lemWeilTensor} and Remark \ref{remDecompOverDvrs}. After extending the coefficient field from $\Q$ to $\Co$, Proposition \ref{propEigenMonoidal} and Corollary \ref{corSymEigen} yield an identification
    \begin{gather}\label{eqYSymDecomp}
        \mc{W}_K(\mc{Y}_\Co)_\tau \cong \Sym^4_\Co \mf{h}^1(\mc{A}_\Co)_\tau \otimes_\Co \Sym^2_\Co \mf{h}^1(E^2_\Co)_\tau,
    \end{gather}
    and similarly when $\tau$ is replaced with $\ol{\tau}$. Since
    \begin{gather*}
        H^1_B(E_\Co, \Co)_\tau = H^{0, 1}(E_\Co, \Co)
    \end{gather*}
    and $a = 3$, the formula \eqref{eqYSymDecomp} implies that $H^6_B(\mc{W}_K(\mc{Y}_\Co), \Co)$ is of pure Hodge type $(3, 3)$. Hence $(\mc{Y}_\Co, K)$ is an abelian sixfold of Weil type. 
    \par
    By \cite[Proposition 1.6]{Milne2022}, there exists a polarization $\lambda$ of $\mc{Y}_\Co = (\mc{A}_\Co \times_{\Co} E_\Co) \times_\Co E_\Co$ such that $\lambda$ has determinant $-u$, where $u$ lies in the image of the field norm $K^\times \to \Q^\times$. Now \cite[Theorem 1.5.1]{Markman2025} implies that $H^6_B(\mc{W}_K(\mc{Y}_{\Co}), \Co(3))$ is spanned by algebraic classes. From Proposition \ref{propSpec} we then find that $H^6_{\et}(\mc{W}_K(\mc{Y}_k), \Q_\ell(3))$ is spanned by algebraic classes.
    \par
    It remains to show that $H^4_{\et}(\mc{W}_K(A), \Q_\ell(2))$ is spanned by algebraic classes. Because $\mc{E}_k$ is supersingular with $\dim_\Q \End^0(\mc{E}_k) = 4$, we have
    \begin{gather}\label{eqWeilMotiveIsTate}
        \mc{W}_K(\mc{E}^2_k) \cong \mathbf{1}(-1)^{\oplus 2}
    \end{gather}
    in $\mc{M}(k)_\Q$, by the proof of \cite[Theorem 2.9]{FL2021}. Since all one-dimensional $K$-vector spaces are isomorphic, any two $K$-module structures on $\mathbf{1}(-1)^{\oplus 2}$ are isomorphic in $\mc{M}(k)_K$. The isomorphism \eqref{eqWeilMotiveIsTate} thus implies the existence of an isomorphism
    \begin{gather*}
        \mc{W}_K(\mc{E}^2_k) \cong \mathbf{1}_K(-1)
    \end{gather*}
    in $\mc{M}(k)_K$. Now \eqref{eqWeilYTensor} implies that
    \begin{gather*}
        \mc{W}_K(A)(2) \cong \mc{W}_K(\mc{Y}_k)(3).
    \end{gather*}
    The $\ell$-adic realization of $\mc{W}_K(\mc{Y}_k)(3)$ is spanned by algebraic classes, so the same is true for the realization of $\mc{W}_K(A)(2)$. This completes the proof of Theorem \ref{thmTate}.

    \section{Consequences of Theorem \ref{thmTate}}\label{secStd}
    In this section we present some corollaries of Theorem \ref{thmTate}. In particular, we prove Grothendieck's standard conjecture $D$ for abelian fourfolds. 
    \par
    Denote rational, $\ell$-adic homological, and numerical equivalence by the respective shorthands $\sim_{\rat}, \sim_\ell,$ and $\sim_{\num}$. For a smooth projective variety $X$ and adequate equivalence relations $\sim$ and $\sim'$, we will write \enquote{$\sim \ = \ \sim' \text{ on } X$} to mean that $\sim$ and $\sim'$ agree for algebraic cycles with rational coefficients on $X$.
    \par
    Again let $k \cong \F_q$ be a finite field. Let $X$ be a smooth projective variety over $k$, and assume that $X$ has motive of abelian type. Kahn \cite{Kahn2003} showed that the Tate conjecture for such an $X$ has many powerful motivic consequences. By Theorem \ref{thmTate}, these consequences hold unconditionally when $X$ is an abelian fourfold. We collect a few of them below.
    \begin{cor}\label{corMotConsequences}
        Let $A$ be an abelian fourfold over $k$.
        \begin{enumerate}[(i)]
            \item We have $\sim_{\rat} \ = \ \sim_{\num}$ on $A$ \cite[Th\'eor\`eme 1]{Kahn2003}.
            \item The crystalline Tate conjecture holds for $A$, by (i) and \cite[Theorem 1.2]{Milne2019}.
            \item The Weil-\'etale motivic cohomology groups $H^i_W(A, \Z(n))$ of $A$ are finitely generated, and the special values of the zeta function of $A$ satisfy the formula conjectured by Lichtenbaum \cite[Corollaires 3.8 and 3.10]{Kahn2003}.
            \item The Chow group with integer coefficients $\CH^2(A)$ is finitely generated \cite[Th\'eor\`eme 4.5]{Kahn2003}.
        \end{enumerate}
    \end{cor}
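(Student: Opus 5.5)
The plan is to prove each item by checking that the hypotheses of the cited results are met, the essential new input being Theorem \ref{thmTate}. All four items come from Kahn's framework \cite{Kahn2003} for varieties over finite fields whose motive is of abelian type. In that framework, the Tate conjecture for $X$ (together with its consequence, semisimplicity of Frobenius, which is known for abelian varieties by Tate) implies $\sim_{\rat} = \sim_{\num}$ on $X$ and the remaining arithmetic statements. So the first step is to verify that for $A$ an abelian fourfold, the Tate conjecture holds in every codimension, with $\Q_\ell$-coefficients.

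The verification goes codimension by codimension. In codimension $0$ and $4$ the claim is trivial. In codimension $1$ it is Tate's theorem \cite{Tate1966}. In codimension $3$ it follows from codimension $1$ by hard Lefschetz for abelian varieties, since $L^2$ is given by an algebraic correspondence. In codimension $2$ it is exactly Theorem \ref{thmTate}. Kahn's hypotheses also involve the products $A^n$ or, more precisely, the Tate conjecture for the motive of $A$ and its duals inside the category generated by $A$. I expect this to be the main point to check carefully: how Kahn's Théorème 1 is formulated. I would read it as requiring the Tate conjecture for $X$ together with $\ell$-adic semisimplicity, which suffices because the motive is finite-dimensional (Kimura) and of abelian type. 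If instead it requires the conjecture for $A\times A$, I would fall back on Kahn's formulation via $\mathfrak{h}(A)$ in the Tate-triangulated setting and use that the relevant Hom groups $\Hom(\mathfrak{h}(A),\mathfrak{h}(A)(r))$ involve only $H^{2r}$ of $A\times A$ in middle degree. That case would need extra input, so I expect the statement is as I first read it and would check this reference before writing anything more.

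With these hypotheses in place, item (i) is Kahn's Théorème 1. Item (ii) combines (i) with Milne's theorem \cite[Theorem 1.2]{Milne2019}, which says that for $X$ satisfying $\ell$-adic Tate and with rational equivalence equal to numerical equivalence (or, more weakly, with $\sim_\ell = \sim_{\num}$), the crystalline Tate conjecture follows. Items (iii) and (iv) are the cited corollaries in \cite{Kahn2003}. Those corollaries are stated under the hypothesis that the Tate conjecture plus rational-equals-numerical hold for the motive, which is what we have established, so they apply directly. No new argument beyond the reference checks is needed.
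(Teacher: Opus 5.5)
Your proposal is correct and follows the paper. You establish the Tate conjecture for $A$ in every codimension (Tate's theorem for divisors, hard Lefschetz for codimension $3$, Theorem \ref{thmTate} for codimension $2$) and then apply the cited results of Kahn and Milne, exactly as the paper does. Your first reading of Kahn's Th\'eor\`eme 1 is the one the paper uses: it needs the Tate conjecture only for $A$ itself, with semisimplicity and Kimura finite-dimensionality automatic for abelian type. So the $A\times A$ fallback is unnecessary, and Theorem \ref{thmTate} could not have supplied it anyway.
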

    \begin{cor}\label{corStd}
        Let $A'$ be an abelian fourfold over a field $k'$ in which $\ell$ is invertible. Then $\sim_\ell \ = \ \sim_{\num}$ on $A'$.
    \end{cor}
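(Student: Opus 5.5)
The plan is to reduce to the case of a finite base field by a spreading-out and specialization argument, and then to combine Corollary \ref{corMotConsequences}(i) with Ancona's standard conjecture of Hodge type, following the strategy indicated in the introduction (Ancona \cite[Theorem 3.18 and Corollary 3.19]{Ancona2021} and Kahn).

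First I treat the case where $k'$ is finite. There Corollary \ref{corMotConsequences}(i) gives $\sim_{\rat} \ = \ \sim_{\num}$ on $A'$. Since $\sim_{\rat}$ is finer than $\sim_\ell$, and $\sim_\ell$ is finer than $\sim_{\num}$, all three relations coincide. The same holds after any finite extension of $k'$, and hence over $\ol{k'}$: a cycle over $\ol{k'}$ is defined over some finite extension, and numerical triviality over $\ol{k'}$ implies numerical triviality over that extension, because intersection numbers are insensitive to base field extension.

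For general $k'$, I reduce to the case where $k'$ is finitely generated over its prime field. Any cycle $Z$ on $A'$, together with the test cycles computing its intersection numbers, and $A'$ itself are all defined over such a subfield. Moreover, homological equivalence is invariant under extension of algebraically closed fields, by smooth base change.

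\emph{Positive characteristic.} Spread $A'$ out to an abelian scheme over a smooth base with closed points having finite residue fields. Ancona's theorem (the standard conjecture of Hodge type, i.e.\ positivity of the intersection pairing on primitive parts modulo numerical equivalence) together with Kahn's argument is what the paper cites. The route I would follow is the one used by Ancona in \cite[Corollary 3.19]{Ancona2021} / André's framework:
\begin{enumerate}[(i)]
\item the equality $\sim_\ell = \sim_{\num}$ at finite-field points, combined with the Hodge standard conjecture over $k'$, gives positivity of the form on the $\sim_\ell$-quotient;
\item a cycle $Z$ that is numerically trivial on the generic fiber specializes to closed fibers, where its class is orthogonal to all algebraic classes;
\item because homological classes are locally constant in the family, and the closed-fiber algebraic classes at finite fields exhaust Tate classes (Theorem \ref{thmTate}), the specialized class is a Tate class pairing to zero with all Tate classes.
\end{enumerate}
The pairing on Tate classes of an abelian fourfold over a finite field is nondegenerate, by semisimplicity of Frobenius on abelian varieties. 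Hence the specialized class vanishes, and so does the class of $Z$ itself, since the specialization map on $\ell$-adic cohomology is an isomorphism.

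\emph{Characteristic zero.} Here $\sim_\ell = \sim_{\num}$ already follows from the classical Hodge index argument (Lieberman). Alternatively, the same specialization argument to a finite field applies, via a model over a finitely generated ring.

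The main obstacle is step (ii)–(iii): making sure that numerical triviality over $k'$ yields orthogonality to all Tate classes on the special fiber, not just to specializations of cycles from $k'$. I would handle this by a Kahn-type argument showing that the specialized class lies in a subspace on which the pairing is nondegenerate. This is where Ancona's Hodge-type positivity enters, to ensure the numerical quotient embeds via specialization.
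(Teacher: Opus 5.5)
Your overall route is the paper's. You reduce to a finitely generated $k'$, spread $A'$ out to an abelian scheme $\mc{A}'$ over a finite-type base $S$, get $\sim_\ell \ = \ \sim_{\num}$ on the closed fibres from Corollary \ref{corMotConsequences}(i), and conclude with \cite[Corollary 3.19]{Ancona2021}. The paper takes $S$ of finite type over $\Spec \Z[1/\ell]$, whose closed points have finite residue fields in every characteristic. So your separate characteristic-zero argument via Lieberman is correct but not needed.

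The problem is your unpacking of the last step: steps (ii) and (iii) are false as stated. If $Z$ is numerically trivial on $A'$, its specialization $z$ at a closed point $s$ is orthogonal only to the $\Q$-span $V$ of specializations of cycles from $A'$. It need not be orthogonal to all algebraic or Tate classes on $\mc{A}'_s$, of which there may be many more. Nondegeneracy of the pairing on the full space of Tate classes does not rescue this. An indefinite nondegenerate form can restrict degenerately to a subspace, and a nonzero isotropic $z \in V \cap V^\perp$ is exactly what must be ruled out. Theorem \ref{thmTate} plus semisimplicity of Frobenius only yields $\sim_\ell \ = \ \sim_{\num}$ on the closed fibre, i.e.\ Corollary \ref{corMotConsequences}(i); it does not by itself kill $z$.

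The missing ingredient is definiteness, used on the closed fibre. It is not used over $k'$, as your step (i) says: the Hodge-type statement over $k'$ concerns only the numerical quotient over $k'$ and plays no role. Concretely, consider codimension two; the other codimensions are classical for abelian varieties.
\begin{enumerate}[(a)]
\item Let $W$ be the $\Q$-vector space of codimension-two cycles on $\mc{A}'_{\ol{s}}$ modulo $\sim_\ell$, and let $V \subseteq W$ be as above. You need $\Q$-coefficients here, not $\Q_\ell$, since definiteness is meaningless otherwise.
\item Because $\sim_\ell \ = \ \sim_{\num}$ on the closed fibre, Ancona's theorem applied there says the intersection form on $W$ is definite on each of the mutually orthogonal pieces of the Lefschetz decomposition $P^2 \oplus LP^1 \oplus L^2P^0$.
\item By Lieberman's theorem, the Lefschetz components of algebraic classes on $A'$ are algebraic. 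Specialization respects the Lefschetz decomposition attached to a polarization over $S$. Hence $V$ is the direct sum of its intersections with these pieces, and the form restricted to $V$ is nondegenerate.
\item Since $z \in V$ and $z \perp V$, we get $z = 0$. Then $Z \sim_\ell 0$, because specialization is an isomorphism on $\ell$-adic cohomology.
\end{enumerate}
This is precisely the content of \cite[Corollary 3.19]{Ancona2021}, which the paper simply cites. Your final paragraph gestures at it, but as written your conclusion rests on (ii)--(iii), which do not hold.
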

    \begin{proof}
        By spreading out, we may assume without loss of generality that $k'$ is a finitely generated field. There then exists an integral finite-type scheme $S$ over $\Spec \Z[1/\ell]$ such that $S$ has fraction field $k'$, and an abelian scheme $\mc{A}'$ over $S$ with generic fiber $A'$. If $s \in S$ is a closed point, then Corollary \ref{corMotConsequences}(i) implies that $\sim_\ell \ = \ \sim_{\num}$ on the fiber $\mc{A}'_s$ over $s$. Now apply \cite[Corollary 3.19]{Ancona2021}.
    \end{proof}
    Lieberman proved the standard conjectures of Lefschetz and K\"unneth type for abelian varieties \cite[Theorem 2A11]{Kleiman1968}, and Ancona proved the standard conjecture of Hodge type for abelian fourfolds \cite[Theorem 3.18]{Ancona2021}. Corollary \ref{corStd} asserts that the one remaining standard conjecture, standard conjecture $D$, also holds for abelian fourfolds. 

    \appendix 
    \section{Review of Hyodo--Kato realization}\label{appHK}
    We briefly recall the properties of the Hyodo--Kato realization we use in the main text, following \cite[Convention (4)]{Ancona2021}. 
    \par
    Let $T$ be a finite extension of $\Q_p$, with ring of integers $\mc{O}_T$ and residue field $k \cong \F_q$. Let $T_0 = W(k)[1/p]$ be the maximal unramified subextension of $T / \Q_p$, and let $\sigma: T_0 \to T_0$ denote the Witt vector Frobenius map.
    \par
    A \textbf{filtered} $\varphi$\textbf{-module} $\mc{D} = (D, \varphi, \Fil^\bullet D_T)$ over $T$ is defined to consist of the following data \cite[Definition 7.3.4]{BC2009}:
    \begin{enumerate}[(i)]
        \item a finite-dimensional $T_0$-vector space $D$;
        \item an additive bijection $\varphi: D \to D$ which is Frobenius-semilinear, meaning that for all $\lambda \in T_0$ and $x \in D$, we have $\varphi(\lambda \cdot x) = \sigma(\lambda) \varphi(x)$;
        \item a decreasing filtration $\Fil^\bullet D_T$ on $D_T = D \otimes_{T_0} T$, such that $\Fil^i D_T = D_T$ for $i \ll 0$ and $\Fil^i D_T = 0$ for $i \gg 0$.
    \end{enumerate}
    Define the \textbf{Hodge number}
    \begin{gather*}
        t_H(\mc{D}) = \sum\limits_{i \in \Z} i \dim_T \gr^i D_T,
    \end{gather*}
    cf. \cite[Definition 8.1.1]{BC2009}. Note that $\varphi^f: D \to D$ is a $T_0$-linear map, so we may take its determinant. Define the \textbf{Newton number} 
    \begin{gather*}
        t_N(\mc{D}) = \frac{1}{f} v_p(\det\limits_{T_0} (\varphi^f |D)),
    \end{gather*}
    where $v_p$ denotes the $p$-adic valuation with $v_p(p) = 1$ \cite[Definition 8.1.7 and Proposition 8.1.9]{BC2009}.
    \par
    Let $D' \subseteq D$ be a $\varphi$-stable $T_0$-subspace, and equip $D'_T$ with the induced filtration $\Fil^i D'_T = D'_T \cap \Fil^i D_T$. Then $\mc{D}' = (D', \varphi|_{D'}, \Fil^\bullet D'_T)$ is itself a filtered $\varphi$-module. We say that $\mc{D}$ is \textbf{weakly admissible} if $t_H(\mc{D}) = t_N(\mc{D})$, and for all $\mc{D}'$ constructed as above, we have $t_H(\mc{D}') \le t_N(\mc{D}')$ \cite[Definition 8.2.1]{BC2009}. We let $\MOD_T$ denote the category of weakly admissible filtered $\varphi$-modules over $T$. It follows from a theorem of Colmez--Fontaine \cite[Th\'eor\`eme A]{CF2000} that weak admissibility is equivalent to the a priori stronger condition of admissibility.
    \par
    There is a \textbf{Hyodo--Kato} realization functor
    \begin{gather*}
        R_{\HK}: \mc{M}(\mc{O}_T)_\Q \to \MOD_T
    \end{gather*}
    as in \cite[Convention (4)]{Ancona2021} (see also \cite[Remark 4.12 and Section 4.15]{DN2018}). Let $\mc{N} \in\mc{M}(\mc{O}_T)_\Q$ be a relative motive and $R_{\HK}(\mc{N}) = (D, \varphi, \Fil^\bullet D_T)$ its Hyodo--Kato realization. Then the filtered $T$-vector space $D_T$ is canonically identified with the de Rham realization $R_{\dR}(\mc{N}_T)$ of the generic fiber of $\mc{N}$. The $\varphi$-module $(D, \varphi)$ is canonically identified with the crystalline realization $R_{\cris}(\mc{N}_k)$ of the special fiber of $\mc{N}$. Since the $\varphi$-action on the crystalline cohomology of a smooth projective variety over $k$ is induced by pullback along the absolute $p$-power Frobenius map \cite[Tag 07N0]{stacks-project}, the $q$-power Frobenius endomorphism of $\mc{N}_k$ acts on $R_{\cris}(\mc{N}_k)$ as $\varphi^f$.

    \makeatletter
    {
        \let\@tocwrite\@gobbletwo
        \section*{Acknowledgements}
    }
    \makeatother
    The author is grateful to Giuseppe Ancona for thoughtful comments. 
    \bigskip
    
    \makeatletter
    {
        \let\@tocwrite\@gobbletwo
        \section*{Tool and computational resource disclosure}
    }
    \makeatother
    A large language model\footnote{The author has opted not to identify the specific model used here, for reasons similar to those given in \cite[Footnote 3]{Kelly2026}.} was used in the preparation of this manuscript to explore proof strategies, to locate relevant literature, to critically examine mathematical arguments, and to assist with copyediting and LaTeX typesetting. The author initially asked the model to attempt to use Markman's results to prove Theorem \ref{thmTate}, and proposed technical directions that a solution could take, including translating Markman's program directly to positive characteristic, or applying Milne's formalism of Lefschetz groups \cite{Milne1999}. In the ensuing interaction, the model suggested a method involving Ancona's analysis of exotic classes \cite{Ancona2021} and the \enquote{stabilization} idea discussed in Remark \ref{remStabilize}. The author subsequently developed this strategy, significantly restructured it, identified and corrected a number of errors and gaps, and introduced several simplifications and reformulations. In particular, the author systematically based the proof on the theory of Chow motives, and devised a motivic technique which simplified the last step of the preliminary approach.
    \par
    The text of the manuscript was composed by the author. All mathematical statements and cited references in the manuscript were independently checked by the author. Sole responsibility for the content of the manuscript lies with the author.
    \bigskip

    \printbibliography

@misc{APFV2025,
  author       = {Arango-Pi{\~n}eros, Santiago and Frengley, Sam and Vemulapalli, Sameera},
  title        = {Galois groups of simple abelian varieties over finite fields and exceptional {T}ate classes},
  year         = {2025},
  eprint       = {2505.09589},
  eprinttype   = {arxiv},
  eprintclass  = {math.NT},
  version      = {v1},
  url          = {https://arxiv.org/abs/2505.09589}
}

@article{Ancona2021,
  author       = {Ancona, Giuseppe},
  title        = {Standard conjectures for abelian fourfolds},
  journal      = {Inventiones Mathematicae},
  volume       = {223},
  number       = {1},
  pages        = {149--212},
  year         = {2021},
  doi          = {10.1007/s00222-020-00990-7},
  url          = {https://doi.org/10.1007/s00222-020-00990-7}
}

@book{Andre2004,
  author       = {Andr{\'e}, Yves},
  title        = {Une introduction aux motifs (motifs purs, motifs mixtes, p{\'e}riodes)},
  series       = {Panoramas et Synth{\`e}ses},
  volume       = {17},
  publisher    = {Soci{\'e}t{\'e} Math{\'e}matique de France},
  location     = {Paris},
  year         = {2004},
  isbn         = {2-85629-164-3}
}

@incollection{Andre2005,
  author       = {Andr{\'e}, Yves},
  title        = {Motifs de dimension finie (d'apr{\`e}s {S.-I. Kimura, P. O'Sullivan,} etc.)},
  booktitle    = {S{\'e}minaire Bourbaki, volume 2003/2004, expos{\'e}s 924--937},
  editor       = {{Association des amis de Nicolas Bourbaki}},
  series       = {Ast{\'e}risque},
  number       = {299},
  pages        = {115--145},
  publisher    = {Soci{\'e}t{\'e} Math{\'e}matique de France},
  location     = {Paris},
  year         = {2005},
  note         = {Expos{\'e} no. 929},
  doi          = {10.24033/ast.679},
  url          = {https://doi.org/10.24033/ast.679}
}

@online{BC2009,
  author       = {Brinon, Olivier and Conrad, Brian},
  title        = {{CMI} Summer School Notes on {$p$}-adic Hodge Theory},
  subtitle     = {Preliminary Version},
  year         = {2009},
  url          = {https://math.stanford.edu/~conrad/papers/notes.pdf},
  urldate      = {2026-08-06}
}

@article{Blake2014,
  author       = {Blake, Chris},
  title        = {A Deuring criterion for abelian varieties},
  journal      = {Bulletin of the London Mathematical Society},
  volume       = {46},
  number       = {6},
  pages        = {1256--1263},
  year         = {2014},
  doi          = {10.1112/blms/bdu079},
  url          = {https://doi.org/10.1112/blms/bdu079}
}

@article{CF2000,
  author       = {Colmez, Pierre and Fontaine, Jean-Marc},
  title        = {Construction des repr{\'e}sentations {$p$}-adiques semi-stables},
  journal      = {Inventiones Mathematicae},
  volume       = {140},
  number       = {1},
  pages        = {1--43},
  year         = {2000},
  doi          = {10.1007/s002220000042},
  url          = {https://doi.org/10.1007/s002220000042}
}

@article{DM1991,
  author       = {Deninger, Christopher and Murre, Jacob},
  title        = {Motivic decomposition of abelian schemes and the {F}ourier transform},
  journal      = {Journal f{\"u}r die reine und angewandte Mathematik},
  volume       = {422},
  pages        = {201--219},
  year         = {1991},
  doi          = {10.1515/crll.1991.422.201},
  url          = {https://doi.org/10.1515/crll.1991.422.201}
}

@article{DN2018,
  author       = {D{\'e}glise, Fr{\'e}d{\'e}ric and Nizio{\l}, Wies{\l}awa},
  title        = {On {$p$}-adic absolute {H}odge cohomology and syntomic coefficients. {I}},
  journal      = {Commentarii Mathematici Helvetici},
  volume       = {93},
  number       = {1},
  pages        = {71--131},
  year         = {2018},
  doi          = {10.4171/CMH/430},
  url          = {https://doi.org/10.4171/CMH/430}
}

@collection{SGA4III,
  editor       = {Artin, Michael and Grothendieck, Alexander and Verdier, Jean-Louis},
  title        = {Th{\'e}orie des topos et cohomologie {\'e}tale des sch{\'e}mas},
  subtitle     = {Tome 3},
  series       = {Lecture Notes in Mathematics},
  volume       = {305},
  publisher    = {Springer-Verlag},
  location     = {Berlin},
  year         = {1973},
  note         = {S{\'e}minaire de G{\'e}om{\'e}trie Alg{\'e}brique du Bois-Marie 1963--1964 (SGA 4), avec la collaboration de P. Deligne et B. Saint-Donat},
  doi          = {10.1007/BFb0070714},
  url          = {https://doi.org/10.1007/BFb0070714}
}

@inbook{Deligne1982,
  author       = {Deligne, Pierre},
  title        = {Hodge cycles on abelian varieties},
  bookauthor   = {Deligne, Pierre and Milne, James S. and Ogus, Arthur and Shih, Kuang-yen},
  booktitle    = {Hodge Cycles, Motives, and Shimura Varieties},
  series       = {Lecture Notes in Mathematics},
  volume       = {900},
  pages        = {9--100},
  publisher    = {Springer-Verlag},
  location     = {Berlin and New York},
  year         = {1982},
  note         = {Notes by James S. Milne; corrected TeXed version with endnotes, revised 2018},
  doi          = {10.1007/978-3-540-38955-2},
  url          = {https://www.jmilne.org/math/Documents/Deligne82.pdf},
  urldate      = {2026-08-06}
}

@incollection{FL2021,
  author       = {Fu, Lie and Li, Zhiyuan},
  title        = {Supersingular irreducible symplectic varieties},
  booktitle    = {Rationality of Varieties},
  editor       = {Farkas, Gavril and van der Geer, Gerard and Shen, Mingmin and Taelman, Lenny},
  series       = {Progress in Mathematics},
  volume       = {342},
  pages        = {147--200},
  publisher    = {Birkh{\"a}user/Springer},
  location     = {Cham},
  year         = {2021},
  doi          = {10.1007/978-3-030-75421-1_7},
  url          = {https://doi.org/10.1007/978-3-030-75421-1_7}
}

@book{Fulton1998,
  author       = {Fulton, William},
  title        = {Intersection Theory},
  edition      = {2},
  series       = {Ergebnisse der Mathematik und ihrer Grenzgebiete. 3. Folge},
  volume       = {2},
  publisher    = {Springer-Verlag},
  location     = {Berlin},
  year         = {1998},
  doi          = {10.1007/978-1-4612-1700-8},
  url          = {https://doi.org/10.1007/978-1-4612-1700-8}
}

@misc{Jiang2025,
  author       = {Jiang, Ruofan},
  title        = {{$p$}-adic monodromy and mod {$p$} unlikely intersections, {II}},
  year         = {2025},
  eprint       = {2512.00687},
  eprinttype   = {arxiv},
  eprintclass  = {math.NT},
  version      = {v1},
  url          = {https://arxiv.org/abs/2512.00687}
}

@book{Kahn2020,
  author       = {Kahn, Bruno},
  title        = {Zeta and {$L$}-Functions of Varieties and Motives},
  series       = {London Mathematical Society Lecture Note Series},
  volume       = {462},
  publisher    = {Cambridge University Press},
  location     = {Cambridge},
  year         = {2020},
  doi          = {10.1017/9781108691536},
  url          = {https://doi.org/10.1017/9781108691536}
}

@article{Kimura2005,
  author       = {Kimura, Shun-Ichi},
  title        = {Chow groups are finite dimensional, in some sense},
  journal      = {Mathematische Annalen},
  volume       = {331},
  number       = {1},
  pages        = {173--201},
  year         = {2005},
  doi          = {10.1007/s00208-004-0577-3},
  url          = {https://doi.org/10.1007/s00208-004-0577-3}
}

@incollection{Kunnemann1994,
  author       = {K{\"u}nnemann, Klaus},
  title        = {On the {C}how motive of an abelian scheme},
  booktitle    = {Motives},
  editor       = {Jannsen, Uwe and Kleiman, Steven L. and Serre, Jean-Pierre},
  series       = {Proceedings of Symposia in Pure Mathematics},
  volume       = {55},
  note         = {Part 1},
  pages        = {189--205},
  publisher    = {American Mathematical Society},
  location     = {Providence, RI},
  year         = {1994},
  doi          = {10.1090/pspum/055.1/1265530},
  url          = {https://doi.org/10.1090/pspum/055.1/1265530}
}

@article{MZ1999,
  author       = {Moonen, Ben J. J. and Zarhin, Yuri G.},
  title        = {Hodge classes on abelian varieties of low dimension},
  journal      = {Mathematische Annalen},
  volume       = {315},
  number       = {4},
  pages        = {711--733},
  year         = {1999},
  doi          = {10.1007/s002080050333},
  url          = {https://doi.org/10.1007/s002080050333}
}

@misc{Markman2025,
  author       = {Markman, Eyal},
  title        = {Cycles on abelian {$2n$}-folds of {W}eil type from secant sheaves on abelian {$n$}-folds},
  year         = {2025},
  eprint       = {2502.03415},
  eprinttype   = {arxiv},
  eprintclass  = {math.AG},
  version      = {v2},
  url          = {https://arxiv.org/abs/2502.03415}
}

@article{Milne1999b,
  author       = {Milne, James S.},
  title        = {Lefschetz motives and the {T}ate conjecture},
  journal      = {Compositio Mathematica},
  volume       = {117},
  number       = {1},
  pages        = {47--81},
  year         = {1999},
  doi          = {10.1023/A:1000776613765},
  url          = {https://www.jmilne.org/math/articles/1999b.html},
  addendum     = {See the correction and corrected version on the author's website}
}

@incollection {Tate1971,
    AUTHOR = {Tate, John},
     TITLE = {Classes d'isog\'enie des vari\'et\'es ab\'eliennes sur un
              corps fini (d'apr\`es {T}. {H}onda)},
 BOOKTITLE = {S\'eminaire {B}ourbaki. {V}ol. 1968/69: {E}xpos\'es 347--363},
    SERIES = {Lecture Notes in Math.},
    VOLUME = {175},
     PAGES = {Exp. No. 352, 95--110},
 PUBLISHER = {Springer, Berlin},
      YEAR = {1971},
      ISBN = {3-540-05356-5; 0-387-05356-6},
   MRCLASS = {14K02},
  MRNUMBER = {3077121},
}

@online{Milne2017,
  author       = {Milne, James S.},
  title        = {Introduction to Shimura Varieties},
  year         = {2017},
  note         = {Revised version},
  url          = {https://www.jmilne.org/math/xnotes/svi.pdf},
  urldate      = {2026-08-06}
}

@misc{Milne2022,
  author       = {Milne, James S.},
  title        = {The {T}ate and standard conjectures for certain abelian varieties},
  year         = {2022},
  eprint       = {2112.12815},
  eprinttype   = {arxiv},
  eprintclass  = {math.NT},
  version      = {v2},
  url          = {https://arxiv.org/abs/2112.12815}
}

@article{OSullivan2011,
  author       = {O'Sullivan, Peter},
  title        = {Algebraic cycles on an abelian variety},
  journal      = {Journal f{\"u}r die reine und angewandte Mathematik},
  volume       = {654},
  pages        = {1--81},
  year         = {2011},
  doi          = {10.1515/CRELLE.2011.025},
  url          = {https://doi.org/10.1515/CRELLE.2011.025}
}

@article{Pohlmann1968,
  author       = {Pohlmann, Henry},
  title        = {Algebraic cycles on abelian varieties of complex multiplication type},
  journal      = {Annals of Mathematics},
  series       = {2},
  volume       = {88},
  number       = {2},
  pages        = {161--180},
  year         = {1968},
  doi          = {10.2307/1970570},
  url          = {https://doi.org/10.2307/1970570}
}

@incollection{Tate1965,
  author       = {Tate, John},
  title        = {Algebraic cycles and poles of zeta functions},
  booktitle    = {Arithmetical Algebraic Geometry},
  editor       = {Schilling, O. F. G.},
  note         = {Proceedings of the Conference at Purdue University, 1963},
  pages        = {93--110},
  publisher    = {Harper \& Row},
  location     = {New York},
  year         = {1965}
}

@article{Tate1966,
  author       = {Tate, John},
  title        = {Endomorphisms of abelian varieties over finite fields},
  journal      = {Inventiones Mathematicae},
  volume       = {2},
  number       = {2},
  pages        = {134--144},
  year         = {1966},
  doi          = {10.1007/BF01404549},
  url          = {https://doi.org/10.1007/BF01404549}
}

@article{Schoen1998, 
    title={Addendum to: Hodge Classes on Self-Products of a Variety with an Automorphism}, 
    volume={114}, 
    doi={10.1023/A:1000566205021}, 
    number={3}, 
    journal={Compositio Mathematica}, 
    author={Schoen, Chad}, 
    year={1998}, 
    pages={321–328}
}

@article {Tankeev1982,
    AUTHOR = {Tankeev, S. G.},
     TITLE = {Cycles on simple abelian varieties of prime dimension},
   JOURNAL = {Izv. Akad. Nauk SSSR Ser. Mat.},
  FJOURNAL = {Izvestiya Akademii Nauk SSSR. Seriya Matematicheskaya},
    VOLUME = {46},
      YEAR = {1982},
    NUMBER = {1},
     PAGES = {155--170, 192},
      ISSN = {0373-2436},
   MRCLASS = {14G13 (14K99)},
  MRNUMBER = {643899},
MRREVIEWER = {Yu.\ G.\ Zarkhin},
}

@article {Milne1999,
    AUTHOR = {Milne, J. S.},
     TITLE = {Lefschetz classes on abelian varieties},
   JOURNAL = {Duke Math. J.},
  FJOURNAL = {Duke Mathematical Journal},
    VOLUME = {96},
      YEAR = {1999},
    NUMBER = {3},
     PAGES = {639--675},
      ISSN = {0012-7094,1547-7398},
   MRCLASS = {14C25 (11G10 14C30 14K05)},
  MRNUMBER = {1671217},
MRREVIEWER = {Fumio\ Hazama},
       DOI = {10.1215/S0012-7094-99-09620-5},
       URL = {https://doi.org/10.1215/S0012-7094-99-09620-5},
}

@misc{Markman2026,
    title={Secant sheaves and Weil classes on abelian varieties}, 
    author={Eyal Markman},
    year={2026},
    version={v2},
    eprint={2509.23403},
    archivePrefix={arXiv},
    primaryClass={math.AG},
    url={https://arxiv.org/abs/2509.23403}, 
}

@inbook{Weil1979,
  author       = {Weil, Andr{\'e}},
  title        = {Abelian varieties and the {H}odge ring},
  bookauthor   = {Weil, Andr{\'e}},
  booktitle    = {{\OE}uvres scientifiques / Collected Papers},
  volume       = {3},
  pages        = {421--429},
  publisher    = {Springer-Verlag},
  location     = {New York},
  year         = {1979},
  note         = {Original paper dated 1977; corrected second printing, 1980}
}

@online{stacks-project,
  author       = {{{The Stacks Project Authors}}},
  title        = {The Stacks Project},
  year         = {2026},
  shorthand    = {Sta26},
  url          = {https://stacks.math.columbia.edu},
  urldate      = {2026-08-06}
}

@article {Kahn2003,
    AUTHOR = {Kahn, Bruno},
     TITLE = {\'Equivalences rationnelle et num\'erique sur certaines
              vari\'et\'es de type ab\'elien sur un corps fini},
   JOURNAL = {Ann. Sci. \'Ecole Norm. Sup. (4)},
  FJOURNAL = {Annales Scientifiques de l'\'Ecole Normale Sup\'erieure.
              Quatri\`eme S\'erie},
    VOLUME = {36},
      YEAR = {2003},
    NUMBER = {6},
     PAGES = {977--1002},
      ISSN = {0012-9593},
   MRCLASS = {14G15 (11G10 11G25 14C25 14F42 19E15)},
  MRNUMBER = {2032532},
MRREVIEWER = {R.\ T.\ Hoobler},
       DOI = {10.1016/j.ansens.2003.02.002},
       URL = {https://doi.org/10.1016/j.ansens.2003.02.002},
}

@incollection {Kleiman1968,
    AUTHOR = {Kleiman, S. L.},
     TITLE = {Algebraic cycles and the {W}eil conjectures},
 BOOKTITLE = {Dix expos\'es sur la cohomologie des sch\'emas},
    SERIES = {Adv. Stud. Pure Math.},
    VOLUME = {3},
     PAGES = {359--386},
 PUBLISHER = {North-Holland, Amsterdam},
      YEAR = {1968},
   MRCLASS = {14C25 (14F20)},
  MRNUMBER = {292838},
MRREVIEWER = {H.\ Popp},
}

@incollection {Grothendieck1969,
    AUTHOR = {Grothendieck, A.},
     TITLE = {Standard conjectures on algebraic cycles},
 BOOKTITLE = {Algebraic {G}eometry ({I}nternat. {C}olloq., {T}ata {I}nst.
              {F}und. {R}es., {B}ombay, 1968)},
    SERIES = {Tata Inst. Fundam. Res. Stud. Math.},
    VOLUME = {4},
     PAGES = {193--199},
 PUBLISHER = {Tata Inst. Fund. Res., Bombay},
      YEAR = {1969},
   MRCLASS = {14.40},
  MRNUMBER = {268189},
MRREVIEWER = {S.\ L.\ Kleiman},
}

@misc{Milne2019,
    AUTHOR = {Milne, J. S.},
    TITLE = {On the {T}ate and standard conjectures over finite fields},
    YEAR = {2019},
    VERSION={v1.1},
    URL = {https://www.jmilne.org/math/articles/TFF.pdf},
}

@article {Clozel1999,
    AUTHOR = {Clozel, L.},
     TITLE = {Equivalence num\'erique et \'equivalence cohomologique pour
              les vari\'et\'es ab\'eliennes sur les corps finis},
   JOURNAL = {Ann. of Math. (2)},
  FJOURNAL = {Annals of Mathematics. Second Series},
    VOLUME = {150},
      YEAR = {1999},
    NUMBER = {1},
     PAGES = {151--163},
      ISSN = {0003-486X,1939-8980},
   MRCLASS = {14C25 (14F20 14K15 14K22)},
  MRNUMBER = {1715322},
MRREVIEWER = {Amnon\ Besser},
       DOI = {10.2307/121099},
       URL = {https://doi.org/10.2307/121099},
}

@misc{Kelly2026,
      title={Some explicit counter-examples to Weibel's conjecture}, 
      author={Shane Kelly},
      year={2026},
      version={v1},
      eprint={2608.16066},
      archivePrefix={arXiv},
      primaryClass={math.AG},
      url={https://arxiv.org/abs/2608.16066}, 
}

@article {Agugliaro2026,
    AUTHOR = {Agugliaro, Thomas},
     TITLE = {Examples for the standard conjecture of {H}odge type},
   JOURNAL = {Doc. Math.},
  FJOURNAL = {Documenta Mathematica},
    VOLUME = {31},
      YEAR = {2026},
    NUMBER = {4},
     PAGES = {855--904},
      ISSN = {1431-0635,1431-0643},
   MRCLASS = {14C15 (11R04)},
  MRNUMBER = {5081020},
       DOI = {10.4171/dm/1057},
       URL = {https://doi.org/10.4171/dm/1057},
}
\end{document}